\newcounter{dummy} \numberwithin{dummy}{section}
\newtheorem{theorem}[dummy]{Theorem}
\newtheorem{corollary}[dummy]{Corollary}
\newtheorem{lemma}[dummy]{Lemma}
\newtheorem{definition}[dummy]{Definition}
\newtheorem{proposition}[dummy]{Proposition}
\theoremstyle{remark}
\newtheorem{remark}[dummy]{Remark}
\newtheorem{example}[dummy]{Example}
\newcommand{\calA}{\mathcal{A}}
\newcommand{\calL}{\mathcal{L}}
\newcommand{\calH}{\mathcal{H}}
\newcommand{\scrF}{\mathscr{F}}
\newcommand{\frakg}{\mathfrak{g}}
\DeclareMathOperator{\Ann}{Ann}
\DeclareMathOperator{\End}{End}
\DeclareMathOperator{\id}{id}
\DeclareMathOperator{\pr}{pr}
\DeclareMathOperator{\rank}{rank}
\DeclareMathOperator{\spn}{span}
\DeclareMathOperator{\tr}{tr}
\newcommand{\ptr}{/ \! /}
\DeclareMathOperator{\Hol}{Hol}
\DeclareMathOperator{\Ad}{Ad}
\DeclareMathOperator{\T}{T}
\DeclareMathOperator{\K}{K}
\DeclareMathOperator{\free}{free}
\DeclareMathOperator{\gr}{gr}
\DeclareMathOperator{\Gr}{Gr}
\DeclareMathOperator{\Iso}{Iso}
\DeclareMathOperator{\iso}{iso}
\newcommand{\Lbra}{[ \![}
\newcommand{\Rbra}{] \!]}
\numberwithin{equation}{section}
\title[Canonical connections on sub-Riemannian manifolds]{Canonical connections on sub-Riemannian manifolds with constant symbol}
\author[E.~Grong]{Erlend Grong}
\address{University of Bergen, Department of Mathematics, P.O.~Box 7803, 5020 Bergen, Norway}
  \email{erlend.grong@uib.com}
\subjclass[2010]{53C17, 17B70, 58A15}
\keywords{Cartan connection, sub-Riemannian manifolds of constant symbol, graded manifolds, compatible affine connections, contact manifolds, $(2,3,5)$-distributions}
\thanks{The author is supported by the grants Research Council of Norway (project number 249980/F20) and GeoProCo from the Trond Mohn Foundation - Grant TMS2021STG02 (GeoProCo).}
\begin{document}

\begin{abstract}
As a tool to address the equivalence problem in sub-Riemannian geometry, we introduce a canonical choice of grading and compatible affine connection, available on any sub-Riemannian manifold with constant symbol. We completely compute these structures for contact manifolds of constant symbol, including the cases where the connections of Tanaka-Webster-Tanno are not defined. We also give an original intrinsic grading on sub-Riemannian (2,3,5)-manifolds, and use this to present the first flatness theorem in this setting.
\end{abstract}

\maketitle


\section{Introduction}
Ever since Strichartz's introduction of sub-Riemannian manifolds \cite{Str86} in 1984, based on earlier works in  e.g. \cite{Bro82,Gav77}, there has been relatively few results related to the equivalence problem of these geometric objects. The known material can be found in \cite{FaGo95,FaGo96,Hug95,BFG99,AgBa12,Alm14}, but a general approach to such a problem has proved quite challenging. While any Riemannian manifold at any point will have the Euclidean space as its metric tangent cone, the generic infinitesimal model of a sub-Riemannian manifold belongs to a large class of spaces called \emph{Carnot groups} \cite{Mit85,Bel96,Gro96}, with each case having their own special considerations. Furthermore, it has been not been clear what the analogue to the main tools of the equivalence problem in Riemannian geometry, Levi-Civita connection and its curvature. See e.g. the Cartan-Ambrose-Hicks theorem~\cite[Chapter 1.12]{ChEb75} and result in \cite{Sin60,PTV96} for examples of works on the equivalence problem of Riemannian manifolds.

A key development in form of a canonical choice of connection was presented by T. Morimoto in his 2008 paper \cite{Mor08}, based on his work in \cite{Mor93}. Using Cartan geometry, Morimoto proved that sub-Riemannian manifolds with \emph{constant symbol}, i.e. with the tangent cones at all points being isometric, have a canonical choice of Cartan connection on its nonholonomic frame bundle. In particular, it is under-appreciated how this result gives us a flatness theorem for sub-Riemannian manifolds, which to our knowledge previously only existed for sub-Riemannian manifolds whose symbol was the standard Heisenberg group. For some result applying Morimoto's formalism to sub-Riemannian manifolds, see \cite{AMS19,BHM20}.

We want to make the theory of Morimoto more explicit and available by translating the result from Cartan geometry to the formalism of a grading of the tangent bundle and a compatible affine connection. In particular, using the theory of selectors developed in \cite{CGJK19}, we give an explicit description of Morimoto's connection. We will show the merits of our method by applying the result to contact manifolds and $(2,3,5)$-manifolds. We emphasize that in the case of contact manifolds, we are consider all cases where we have a constant symbol; not just the case where the sub-Riemannian metric is defined from the contact form as with connections of Tanaka, Webster and Tanno \cite{Tan76,Web78,Tan89}.

The structure of the paper is as follows. In Section~\ref{sec:DefSR} we will introduce the basic definitions of sub-Riemannian manifolds, including theory related to compatible affine connections. Section~\ref{sec:Carnot} will introduce Carnot groups and algebras, and their isometry groups. In Section~\ref{sec:ConstSym} we define the symbol of a sub-Riemannian manifold at a point and the extra structure that exists for manifolds with constant symbol. We also introduce the important notion of \emph{strongly compatible connections} on the nonholonomic tangent bundle and show that these only exist in the case of constant symbols. In Section~\ref{sec:Grading}, we work with sub-Riemannian manifolds whose tangent bundle is graded in a way compatible with the growth of the flag $E = E^{-1} \subseteq E^{-2} \subseteq \cdots \subseteq E^{-s}$ produced by an increasing number of brackets of the sections of the horizontal bundle of $E$. We finally use this formalism to write Morimoto's canonical choice of Cartan connection as a grading and affine connections in Section~\ref{sec:Morimoto}. The final conditions are presented in Theorem~\ref{th:MorimotoGC}.

In the final two sections, we apply our result to two different examples. Further examples can be found in later paper \cite{Gro22b}. In Section~\ref{sec:Contact} we work with contact manifolds whose constant symbol will be the Heisenberg algebra, but not necessarily with the standard metric. Surprisingly, the grading given from Morimoto's theory is not necessarily the one given by the Reeb vector field, nor does its connection  correspond to the connections of Tanaka-Webster-Tanno in the cases when they are defined. In the final section, Section~\ref{sec:235}, we introduce a completely original canonical grading for sub-Riemannian manifolds with growth vectors $(2,3,5)$. We then show how this grading and affine connection corresponding to Morimoto's normalization condition for Cartan connections. In order to present an explicit result, we include this flatness theorem proved in Section~\ref{sec:Structure235}, which, although complicated, can be explicitly computed.

Let $(M,E,g)$ be any sub-Riemannian manifold with growth vector $(2,3,5)$. Choose a local orthonormal basis $X_1$, $X_2$ be any local orthonormal basis of $E$ defined on a neighborhood $U$. Introduce a basis $X_3 = [X_1, X_2]$, $X_4 = [X_1, X_3]$ and $X_5 = [X_2, X_3]$ with $[X_i, X_j] = \sum_{k=1}^5 c_{ij}^k X_k$. Introduce a (local) decomposition $TM  = E \oplus V^{-2} \oplus V^{-3} = E \oplus \spn \{ Z \} \spn \{ Y_1, Y_2\}$ with
\begin{align*}
Z & = X_3 + ( c_{23}^3+c_{24}^4 + c_{25}^5) X_1 - ( c_{13}^3+c_{14}^4 + c_{15}^5) X_2, \\
Y_1 & = X_4 - (c_{14}^4 + c_{15}^5) X_3 + (c_{24}^3 - X_2(c_{14}^4 + c_{15}^5) + c_{24}^4 (c_{14}^4 + c_{15}^5) + c_{24}^5(c_{24}^4 + c_{25}^5)) X_1\\
& \qquad -  (c_{14}^3 - X_1(c_{14}^4 + c_{15}^5) + c_{14}^4 (c_{14}^4 + c_{15}^5) + c_{14}^5(c_{24}^4 + c_{25}^5)) X_2, \\
Y_2 & = X_5 - (c_{24}^4 + c_{25}^5) X_3 + (c_{25}^3 - X_2(c_{25}^4 + c_{25}^5) + c_{25}^4 (c_{14}^4 + c_{15}^5) + c_{25}^5(c_{24}^4 + c_{25}^5)) X_1 \\
& \qquad -  (c_{15}^3 - X_1(c_{25}^4 + c_{25}^5) + c_{15}^4 (c_{14}^4 + c_{15}^5) + c_{15}^5(c_{24}^4 + c_{25}^5)) X_2,
\end{align*}
Define a Riemannian metric $\bar{g}$ be the Riemannnian metric making $X_1, X_2, Z, Y_1, Y_2$ into an orthonormal basis. If $\nabla^{\bar{g}}$ is its Levi-Civita connection, we define a new connection $\nabla$ such that $E$, $V^{-2}$ and $V^{-3}$ are parallel, with $\nabla Z = 0$ and with
\begin{align*}
\langle \nabla_{X_i} X_j, X_k \rangle_{\bar{g}} & =\langle \nabla_{X_i} Y_j, Y_k \rangle_{\bar{g}} = \langle \nabla_{X_i}^{\bar{g}} X_j, X_k \rangle_{\bar{g}}, \\
\langle \nabla_{Z} X_j, X_k \rangle_{\bar{g}} & = \langle\nabla_{Z} Y_j, Y_k \rangle_{\bar{g}} = \langle [Z,X_j], X_k \rangle_{\bar{g}} + \frac{1}{2} (\mathcal{L}_Z \bar{g})(X_j, X_k), \\
\langle \nabla_{Y_i} X_j, X_k \rangle_{\bar{g}} & =\langle \nabla_{Y_i} Y_j, Y_k \rangle_{\bar{g}} = \langle [Y_i,X_j], X_k \rangle_{\bar{g}} + \frac{1}{2} (\mathcal{L}_{Y_i} \bar{g})(X_j, X_k).
\end{align*}

\begin{theorem}[Flatness theorem for $(2,3,5)$-manifolds] \label{th:Flat235} 
The metric $\bar{g}$ and the connection $\nabla$ does not depend on the choice of local basis $X_1,X_2$ and can be defined globally. $(M,E,g)$ is locally isometric to the Cartan nilpotent group if and only if the curvature $R$ of $\nabla$ vanishes and the only non-zero parts of the torsion $T$ of $\nabla$ are given by
$$T(X_2, X_1) = Z, \qquad T(Z, X_j) = Y_j.$$
\end{theorem}

\paragraph{\it Acknowledgement:} We thank Boris Kruglikov and Dennis The for helpful questions and for pointing out the reference \cite{Sag08}.
\section{Sub-Riemmannian manifolds} \label{sec:DefSR}
Let $M$ be a connected manifold. A sub-Riemannian structure is a pair $(E,g)$ where $E$ is a subbundle of the tangent bundle $TM$ and $g = \langle \, \cdot \, , \, \cdot \, \rangle_g$ is a metric tensor defined only on $E$. We call $E$ \emph{the horizontal bundle}. For the rest of the paper, we will assume that $E$ is \emph{bracket-generating}, meaning that the sections of $E$ and their iterated brackets span all of $TM$. With this assumption, we can make $(M, E,g)$ into a metric space with the following construction. An absolutely continuous curve $\gamma:[a,b] \to M$ is called \emph{horizontal} if $\dot \gamma(t) \in E_{\gamma(t)}$ for almost every~$t$. For such a curve, we can define its length as
$$\mathrm{Length}(\gamma) = \int_a^b \langle \dot \gamma , \dot \gamma \rangle_g^{1/2}(t) \, dt.$$
We can then define \emph{the Carnot-Carath\'eodory distance} $d_{cc}(x,y)$ between the points $x,y \in M$ as the infimum of the lengths of all horizontal curves connecting them. When $E$ is bracket-generating, this distance is always finite and it induces the same topology as the manifold topology.

In this paper we will make the additional assumption that $E$ is \emph{equiregular}, meaning that for any $k=0,1, \dots, $
$$E^{-k} = \spn \{ [X_1, \cdots, [X_{j-2} ,[X_{j-1},X_j]] \cdots]|_x \, : j=0,\cdots, k, X_i \in \Gamma(E), x \in M\},$$
is not only a subset, but a subbundle of $TM$. In particular, the rank of $E_x^{-k}$ does not depend on $x$. In the above expression, for the cases $j =0$ and $j =1$, we interpret the bracket $[X_1, \cdots, [X_{j-2} ,[X_{j-1},X_j]] \cdots]$ as respectively $0$ and $X_1$. We have a corresponding flag of subbundles,
\begin{equation} \label{Flag} E^0 = 0 \subsetneq E^{-1} = E \subsetneq E^{-2} \subsetneq \cdots \subsetneq E^{-s}  = TM.\end{equation}
The minimal integer $s$ such that $E^{-s} = TM$ is called \emph{the step of $E$} and the collection of the ranks $\mathfrak{G} = (\rank E^{-1}, \dots, \rank E^{-s})$ is called \emph{the growth vector} of $(M,E,g)$.
For more on sub-Riemannian manifolds, see \cite{Mon02,ABB20}.

We make the following definition related to affine connections and compatibility with a sub-Riemannian structure $(E,g)$.
\begin{definition}
Let $(E,g)$ be a sub-Riemannian structure on a manifold $M$.
\begin{enumerate}[\rm (i)]
\item Let $\nabla$ be an affine connection on $TM$. The subbundle $E$ is called parallel with respect to $\nabla$ if $\nabla_Y X |_x  \in E_x$ for any $Y \in \Gamma(TM)$, $X \in \Gamma(E)$, $x\in M$. Note that if $E$ is parallel, we obtain an affine connection on $E$ by restriction $\nabla|E$ of $\nabla$.
\item An affine connection $\nabla^E$ on $E$ is said to be compatible with $g$ if
$$Y \langle X_1, X_2 \rangle_g = \langle \nabla_Y^E X_1, X_2 \rangle_g + \langle X_1, \nabla_Y^E X_2 \rangle_g,$$
for any $Y \in \Gamma(TM)$, $X_1, X_2 \in \Gamma(E)$.
\item An affine connection $\nabla$ on $TM$ is compatible with $(E,g)$ if $E$ is parallel and $\nabla|E$ is compatible with $g$.
\end{enumerate}
\end{definition}

Without getting into the general theory of length minimizers in sub-Riemannian manifolds, for which we again refer to  \cite{Mon02,ABB20}, we mention the following relationship between them and compatible connections found in \cite{GoGr17,Gro20}. Recall that the torsion $T$ of a connection $\nabla$ is defined as
$$T(X, Y) = \nabla_X Y - \nabla_Y X - [X, Y].$$
\begin{proposition}
Let $\nabla$ be any affine connection on $TM$ compatible with $(E,g)$ and with torsion $T$. Let $\gamma: [a,b] \to M$ be a length minimizer of the distance $d_{cc}$ between the points $\gamma(a)$ and $\gamma(b)$. Then at least one of the following holds.
\begin{enumerate}[\rm (i)]
\item (Normal geodesic) There is a one-form $\lambda(t)$ along $\gamma(t)$ satisfying for any $u \in E_{\gamma(t)}$ and $w \in T_{\gamma(t)}M$,
$$(\nabla_{\dot \gamma} \lambda(t)) (w) = - \lambda(t) (T(\dot \gamma(t), w)), \qquad \lambda(t)(u) = \langle \dot \gamma(t), u \rangle_g.$$
\item (Abnormal curves) There is a one-form $\lambda(t)$ along $\gamma(t)$ satisfying for any $u \in E_{\gamma(t)}$ and $w \in T_{\gamma(t)}M$,
$$(\nabla_{\dot \gamma} \lambda(t)) (w) = - \lambda(t) (T(\dot \gamma(t), w)), \qquad \lambda(t)(u) = 0.$$
\end{enumerate}
Furthermore, curves satisfying {\rm (i)} above are always smooth and local length minimizers.
\end{proposition}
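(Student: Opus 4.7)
The plan is to deduce this from the Pontryagin Maximum Principle (PMP) applied to the sub-Riemannian energy-minimization problem. I would parametrize horizontal curves through a local orthonormal frame $X_1,\ldots,X_r$ of $E$ by measurable controls $u(t) = (u^\alpha(t))$ with $\dot\gamma(t) = u^\alpha(t) X_\alpha(\gamma(t))$. PMP then produces an absolutely continuous covector $\lambda(t) \in T^*_{\gamma(t)}M$ and an abnormal multiplier $\nu \in \{0,1\}$ such that the pre-Hamiltonian
\[
h(x,p,u) = p(u^\alpha X_\alpha(x)) - \tfrac{\nu}{2}\sum_\alpha (u^\alpha)^2
\]
is maximized along the trajectory. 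Maximality gives $\nu u^\alpha = \lambda(X_\alpha)$, which is $\lambda|_{E_{\gamma(t)}} = \langle\dot\gamma,\cdot\rangle_g$ when $\nu = 1$ (the normal case) and $\lambda|_{E_{\gamma(t)}} = 0$ when $\nu = 0$ (the abnormal case).

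Next, I would convert the adjoint equation into covariant form. In local coordinates with $\lambda = p_k\,dx^k$, the equation $\dot p_k = -\partial_{x^k}(p_l u^\alpha X_\alpha^l)$, combined with $\dot\gamma^i = u^\alpha X_\alpha^i$ and the product and chain rules, yields for any vector field $Y$ defined near $\gamma(t_0)$
\[
\tfrac{d}{dt}\lambda(Y) = u^\alpha \lambda([X_\alpha,Y]).
\]
Using $(\nabla_{\dot\gamma}\lambda)(Y) = \tfrac{d}{dt}\lambda(Y) - u^\alpha \lambda(\nabla_{X_\alpha}Y)$ together with the torsion identity $T(X_\alpha,Y) = \nabla_{X_\alpha}Y - \nabla_Y X_\alpha - [X_\alpha,Y]$, the sought equation $(\nabla_{\dot\gamma}\lambda)(Y) = -\lambda(T(\dot\gamma,Y))$ reduces to the single identity $u^\alpha \lambda(\nabla_Y X_\alpha) = 0$.

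The main obstacle, and the place where compatibility of $\nabla$ with $(E,g)$ is used, is verifying this identity. Parallelism of $E$ under $\nabla$ gives $\nabla_Y X_\alpha \in E$, and metric compatibility on $E$ makes the connection matrix $\omega^\beta_\alpha(Y) := \langle \nabla_Y X_\alpha, X_\beta\rangle_g$ skew in $(\alpha,\beta)$. In the normal case, $u^\alpha \lambda(\nabla_Y X_\alpha) = u^\alpha u^\beta \omega^\beta_\alpha(Y)$, which vanishes by the skew-symmetry; in the abnormal case, $\lambda|_E = 0$ together with $\nabla_Y X_\alpha \in E$ gives the vanishing directly. For the final assertion, the maximized Hamiltonian $H(x,p) = \tfrac{1}{2}\sum_\alpha p(X_\alpha)^2$ is smooth on $T^*M$, so its flow and hence the projection $\gamma$ is smooth; local length-minimality of normal geodesics then follows from the standard field-of-extremals argument in sub-Riemannian geometry as in \cite{Mon02,ABB20}, which applies verbatim since our covariant rewriting concerns the same Hamiltonian system.
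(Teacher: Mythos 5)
Your proposal is correct. The paper itself does not include a proof of this proposition (it is quoted from \cite{GoGr17,Gro20}), but your argument is precisely the standard route taken there: apply the Pontryagin Maximum Principle to the energy functional, obtain the adjoint equation $\tfrac{d}{dt}\lambda(Y)=\lambda([\dot\gamma,Y])$ together with the normal/abnormal dichotomy $\lambda|_E=\langle\dot\gamma,\cdot\rangle_g$ or $\lambda|_E=0$, and observe that rewriting the adjoint equation covariantly leaves exactly the residual term $u^\alpha\lambda(\nabla_Y X_\alpha)$, which is killed by parallelism of $E$ plus skew-symmetry of the connection forms (normal case) or by parallelism of $E$ alone (abnormal case). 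The only point worth making explicit is the routine reparametrization identifying length minimizers with energy minimizers before invoking the PMP; otherwise the argument is complete.
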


\begin{remark}
If $E$ is a subbundle of $TM$ with rank greater or equal to $2$, then it will generically be bracket-generating in the sense of \cite[Proposition 2]{Mon93}. Furthermore, any bracket-generating subbundle $E$ will be equiregular outside of a closed set with empty interior \cite[Section 2.1.2, p. 21]{Jea14}. \end{remark}

\begin{remark}
Here and in the next section, we use $E^k$ with $k$ negative. Similarly, we will in Section~\ref{sec:Carnot} use negative grading for Carnot algebras. This convention is used to make our notation coincide with the one used when dealing with Tanaka prolongations, but as the prolongations of Carnot algebras have no positive part \cite[Proposition~1]{Mor08}, we will not encounter positively graded subspaces.
\end{remark}
\section{Carnot groups and algebras} \label{sec:Carnot}
\subsection{Definitions}
\emph{A stratified Lie algebra} is a graded Lie algebra
$$\mathfrak{g}_- = \mathfrak{g}_{-s} \oplus \cdots \oplus \mathfrak{g}_{-1}, \quad \text{ such that } \quad [\mathfrak{g}_{-1}, \mathfrak{g}_{-k}] = \mathfrak{g}_{-k-1},$$
for $k = 1, \dots, s-1$ and with $[\mathfrak{g}_{-1}, \mathfrak{g}_{-s}]=0$. In particular, $\mathfrak{g}_-$ is nilpotent. If in addition, $\mathfrak{g}_{-1}$ is furnished with an inner product $\langle \, \cdot \, , \, \cdot \, \rangle_{\mathfrak{g}_{-1}}$, then $\mathfrak{g}_-$ is called \emph{a Carnot algebra}. Let $G_-$ be the corresponding simply connected Lie group and define a sub-Riemannian structure $(E,g)$ on $G_-$ by left translation of $\mathfrak{g}_{-1}$ and its inner product. We say that $(G_-,E,g)$ is \emph{a Carnot group} which will be a sub-Riemannian manifold with an equiregular horizontal bundle. For examples, see Sections~\ref{sec:Heisenberg} and~\ref{sec:CarnotNil}.

\subsection{Isometries} \label{sec:Isometries}
We introduce some  general facts and notation related to isometry groups of Carnot groups and their Lie algebras. For details regarding this information we refer to~\cite{LDO16,Gro16b}. Any isometry of $(G_-, E,g)$ is a composition of a left translation and a group automorphism. More precisely, a group automorphism $\Psi:G_- \to G_-$ is a sub-Riemannian isometry if the corresponding Lie algebra automorphism $\psi = \Psi_{*,1} : \mathfrak{g}_- \to \mathfrak{g}_-$ maps $\mathfrak{g}_{-1}$ onto~$\mathfrak{g}_{-1}$ isometrically. For this reason, we introduce the following definitions.
\begin{definition}
Let $\mathfrak{g}_- = \mathfrak{g}_{-s} \oplus \cdots \oplus \mathfrak{g}_{-1}$ and $\tilde {\mathfrak{g}}_- = \tilde {\mathfrak{g}}_{-s} \oplus \cdots \oplus \tilde {\mathfrak{g}}_{-1}$ be two Carnot algebras.
\begin{enumerate}[\rm (i)]
\item A linear map $\psi: \mathfrak{g}_- \to \tilde {\mathfrak{g}}_-$ will be called an isometry if $\psi$ is a Lie algebra isomorphism that maps $\mathfrak{g}_{-1}$ onto~$\tilde{\mathfrak{g}}_{-1}$ isometrically. We write $\Iso(\mathfrak{g}_-, \tilde {\mathfrak{g}}_-)$ for the space of all such maps.
\item We say that $\mathfrak{g}_-$ and $\tilde {\mathfrak{g}}_-$ are isometric if there is an isometry connecting them.
\end{enumerate}
\end{definition}
Observe that isometries have degree zero from the definition of Carnot algebras. The group $G_0 = \Iso(\mathfrak{g}_-) = \Iso(\mathfrak{g}_-, \mathfrak{g}_-)$ is a Lie group with Lie algebra $\mathfrak{g}_0 = \iso(\mathfrak{g}_-)$ consisting of derivations of degree zero such that their restriction to $\mathfrak{g}_{-1}$ are skew-symmetric. In other word, $\mathfrak{g}_0$ consists of all maps $D: \mathfrak{g}_- \to \mathfrak{g}_-$ such that $D(\mathfrak{g}_{-k}) \subseteq \mathfrak{g}_{-k}$ and such that for any $B, C \in \mathfrak{g}_-$ and $A \in \mathfrak{g}_{-1}$,
$$D[B,C] = [DB, C] + [B, DC], \qquad \langle DA, A \rangle_{\mathfrak{g}_{-1}} = 0.$$

We can define an extended Lie algebra $\mathfrak{g} = \mathfrak{g}_- \oplus \mathfrak{g}_0 = \mathfrak{g}_- \oplus \iso(\mathfrak{g}_-)$ with brackets,
$$[ A +D_1 ,  B+ D_2] = [A,B] + D_1 B- D_2 A + [D_1, D_2] , \qquad D_1, D_2 \in \mathfrak{g}_0, A,B \in \mathfrak{g}_-.$$
This will be the Lie algebra of the isometry group of $(G_-, E, g)$.

\subsection{The free nilpotent algebra and induced inner products on Carnot algebras} \label{sec:Free}
Recall first the following definition. If $q:W \to V$ is a surjective linear map between two inner product spaces, we say that it is a \emph{linear submetry} if $q|_{(\ker q)^\perp}$ is an isometry from $(\ker q)^\perp$ onto $V$. We observe that if $W$ is an inner product space and $q: W \to V$ is a surjective linear map to a vector space, there is a unique inner product on $V$ making $q$ into a linear submetry.

Let $W$ be a vector space and define $\T(W,s) = \oplus_{j=1}^s W^{\otimes j}$ as the truncated tensor algebra up to step $s$. We consider $\T(W,s)$ as the an algebra with product $(A,B) \mapsto A \otimes B$ using the convention that tensor products of more than $s$ elements in $W$ vanish. We define a subalgebra $\K(W,s)$ of elements
$$\K(W,s) = \spn \{ A \otimes B + B \otimes A, A \otimes B \otimes C + B \otimes C \otimes A + C \otimes A \otimes B \, : \, A,B,C \in \T(W,s)\}. $$
and a submersion
\begin{equation} \label{kmap}
k: \T(W,s) \to \free(W,s) := \T(W,s)/\K(W,s),
\end{equation}
We furnish $\free(W,s)$ with a Lie algebra structure given by
$$[k(A) , k(B)] = k(A \otimes B - B \otimes A).$$
This algebra will be a stratified Lie algebra
$$\mathfrak{f}_- = \free(W,s) = \mathfrak{f}_{-s} \oplus \cdots \oplus \mathfrak{f}_{-1}, \qquad \text{ with } \mathfrak{f}_{-j} = k(W^{\otimes j});$$
so in particular $\mathfrak{f}_{-1} = W$. The Lie algebra $\mathfrak{f}_- = \free(W,s)$ is called \emph{the free nilpotent algebra of step $s$ generated by $W$}.
If $W$ is an inner product space, then $\mathfrak{f} =\free(W,s)$ has the structure of a Carnot algebra. If we define $F_0 = \Iso(\mathfrak{f})$, then there is a bijection between linear isometries $q: W\to W = \mathfrak{f}_{-1}$ and elements $\psi_q \in F_0$ given by $\psi_q|_{\mathfrak{f}_{-1}} = q$.

Let $\mathfrak{g}_- =  \mathfrak{g}_{-s} \oplus \cdots \oplus \mathfrak{g}_{-1}$ be any stratified Lie algebra. Then there is a surjective Lie algebra homomorphism
\begin{equation} \label{Pmap}
P: \free(\mathfrak{g}_{-1},s) \to \mathfrak{g}_-
\end{equation}
given by dividing out the additional relations. It follows that $\mathfrak{g}_-$ is uniquely determined by the ideal $\mathfrak{a} =\ker P$. Furthermore, the Lie group $G_0 = \Iso(\mathfrak{g}_- )$ is isomorphic to the Lie group
$$\{ \psi \in F_0 \, : \, \psi(\mathfrak{a}) \subseteq \mathfrak{a}\}.$$

Assume that $\mathfrak{g}_- = \mathfrak{g}_{-s} \oplus \cdots \oplus \mathfrak{g}_{-1}$ has the structure of a Carnot algebra. We can then define the inner product in an iterative way as follows.
\begin{enumerate}[$\bullet$]
\item The grading of $\mathfrak{g}_{-}$ induces a grading on $\wedge^2 \mathfrak{g}_{-}$. Explicitly,
$$(\wedge^2 \frakg_{-})_{-k} = \spn \{ A_1 \wedge A_2 \, : \, A_1\in \frakg_{-i}, A_2 \in \frakg_{-j}, i+j =k\}.$$
Observe that evaluating elements in $(\wedge^2 \frakg_-)_{-k}$ under the Lie bracket gives a surjective map
\begin{equation} \label{BracketSubmetry} [\cdot, \cdot]: (\wedge^2 \frakg_-)_{-k} \to \frakg_{-k}.\end{equation}
Notice also that if $\frakg_{\geq -k} = \frakg_{-k} \oplus \cdots \oplus \frakg_{-1}$,
then $(\wedge^2 \frakg)_{-k} \subseteq \wedge^2 \frakg_{\geq -k+1}$.
\item From the inner product on $\frakg_{-1}$, define an inner product on $\wedge^2 \frakg_{-1} = (\wedge^2 \frakg_{-})_{-2}$ such that if $A_1, \dots, A_n$ is any orthonormal basis $\frakg_{-1}$, then $A_i \wedge A_j$, $i <j$ form an orthonormal basis of $\wedge \frakg_{-1}$. As the Lie brackets are a surjective map from the latter space to $\frakg_{-2}$, there is a unique inner product on $\frakg_{-2}$ that makes the mentioned map a submetry.
\item Repeating this procedure, if we have an inner product defined on $\frakg_{\geq -k+1}$, then we can induce an inner product on $(\wedge^2 \frakg_-)_{-k} \subseteq \wedge^2 \frakg_{-k+1}$, and give $\frakg_{-k}$ the inner product such that \eqref{BracketSubmetry} a submetry.
\end{enumerate}
We also have an an induced inner product on $\mathfrak{g}_0 = \iso(\mathfrak{g}_- )$. Finally, any $D \in \frakg_0$ can be considered as $D \in \End \frakg_{-} = \frakg_{-}^* \otimes \frakg_{-}$ and define
$$\langle D_1, D_2 \rangle_{\mathfrak{g}_0} = \tr_{\mathfrak{g}_-}\langle  D_1 \times, D_2 \times \rangle_{\mathfrak{g}_-}.$$
\begin{remark} Let $D \in \frakg_0$ act on elements in $\wedge^2 \frakg_-$ by $D \cdot (A \wedge B) = DA \wedge B + A \wedge DB$, $A,B \in \frakg_-$. Since any $D \in \frakg_0$ is a Lie algebra derivation, it will preserve $\ker [\cdot, \cdot]$ of elements in the wedge product that vanish under the Lie bracket. Using that by our construction of the inner product on $\frakg_{-1}$, since
$$\langle [A_1,A_2], [B_1, B_2]\rangle = \left\langle \pr_{(\ker [\cdot, \cdot])^{\perp}} A_1 \wedge A_2, \pr_{(\ker [\cdot, \cdot])^{\perp}} B_1 \wedge B_2  \right\rangle$$
for $A_1 \wedge A_2, B_1 \wedge B_2 \in (\wedge^2 \frakg_-)_{-k}$, by an iterative argument, $D$ is also skew-symmetric on $\frakg_-$ and preserves $(\ker [\cdot, \cdot])^\perp$.
\end{remark}

\begin{remark}
The above construction also make sense when $W$ is a vector bundle rather than a vector space. We will take advantage of this in Section~\ref{sec:CompNonHol}.
\end{remark}

\section{Nonholonomic tangent bundle and constant symbol} \label{sec:ConstSym}

\subsection{Sub-Riemannian manifolds with constant symbols}
Let $(M,E,g)$ be a sub-Riemannian manifold. Assume that $E$ is bracket-generating and equiregular. We define \emph{the nonholonomic tangent bundle} of $(M,E,g)$ by
$$\gr E = E^{-s}/E^{-s+1} \oplus \cdots \oplus E^{-2}/E \oplus E.$$
For each $x \in M$, $\gr_x ;= \gr E_x$ can be given the the structure of a Carnot algebra, with grading $\gr_{x,-k} := (\gr_x)_{-k} = E^{-k}_x/E^{-k+1}_x$ and with Lie brackets $\Lbra \, \cdot \, , \, \cdot \, \Rbra$ defined by
$$\Lbra X_x \bmod E^{-i}_x , Y_x \bmod E^{-j}_x \Rbra = [X, Y]|_x \bmod E^{-i-j}_x.$$
Here, $[X,Y]$ denotes the commutator bracket of the vector fields $X$ and $Y$ extending the vectors $X_x$ and $Y_x$. We observe that this bracket is well defined as the right hand side does not depend on the chosen extensions. Since $\gr_{x,-1} = E_x$ has an inner product, we have a Cartan algebra structure on $\gr_x$. We say that $\gr_x$ with its Cartan algebra structure is the \emph{symbol} of $(M,E,g)$ at $x \in M$.
\begin{definition}
Let $(M,E,g)$ be a sub-Riemannian manifold with $E$ bracket-generating and equiregular and let $\mathfrak{g}_-$ be a Cartan algebra. We say that $(M,E,g)$ has constant symbol $\mathfrak{g}_-$ if $\gr_x$ is isometric to $\mathfrak{g}_-$ for any $x \in M$.
\end{definition}
If $(M,E,g)$ has constant symbol $\mathfrak{g}_-$, we can then define \emph{nonholonomic frame bundle} as the principal bundle
$$G_0 \to \mathscr{F} \to M,$$
where $\mathscr{F}_x = \Iso(\mathfrak{g}_-, \gr_x)$ for any $x \in M$, $G_0 = \Iso(\mathfrak{g}_-)$ and $G_0$ acts on $\mathscr{F}_x$ by composition on the right.

\begin{remark}
Let $\Gr_x$ be the Carnot group corresponding to $\gr_x$ with its sub-Riemannian structure. Recall the definition of the Carnot Caratheodory metric $d_{cc}$ in Section~\ref{sec:DefSR}. The space $\Gr_x$ is then the tangent cone of the metric space $(M, d_{cc})$ at $x\in M$, see \cite{Mit85,Bel96,Gro96}. Hence, assuming constant symbol is equivalent to assuming that all of the tangent cones of $(M,d_{cc})$ are isometric.
\end{remark}

\subsection{Compatible connections on the nonholonomic tangent bundle} \label{sec:CompNonHol}
Assume that $E$ is bracket-generating, equiregular and define $\gr = \gr E$.
\begin{definition}
 Let $\nabla^{\gr}$ be an affine connection on $\gr$.
\begin{enumerate}[\rm (i)]
\item We say that $\nabla^{\gr}$ is a compatible connection if $\gr_{-j} = E^{-j}/E^{-j+1}$ is a parallel subbundle for $j =1,\dots, s$ and, furthermore, its restriction to $\gr_{-1} = E$ is compatible with the sub-Riemannian metric $g$.
\item We say that $\nabla^{\gr}$ is strongly compatible if it is compatible and furthermore, for any $Y \in \Gamma(TM)$, $A, B \in \Gamma(\gr)$, we have
\begin{equation} \label{CompBracket} \nabla_Y^{\gr} \Lbra A, B \Rbra = \Lbra \nabla_Y^{\gr} A, B \Rbra + \Lbra A, \nabla_Y^{\gr} B \Rbra.\end{equation}
\end{enumerate}
\end{definition}
Compatible connections will always exist by taking a direct sum of connections on each $\gr_{-j}$, $j=1, \dots,s$, with the restriction that the one used on $\gr_{-1}$ has to be a metric connection. By contrast, strongly compatible only exists in the following special case.
\begin{proposition}
There exists a strongly compatible connection on $\gr$ if and only if $(M,E,g)$ has constant symbol.
\end{proposition}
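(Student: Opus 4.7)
The plan is to prove both implications by the same underlying observation: strong compatibility is exactly the infinitesimal version of the statement that parallel transport acts by Carnot algebra isometries on the fibers of $\gr$.

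For the direction ``strong compatibility implies constant symbol'', suppose $\nabla^{\gr}$ is strongly compatible. Fix $x,y \in M$ and a smooth curve $\gamma : [0,1] \to M$ joining them (which exists since $M$ is connected by our standing assumption). Let $\tau_\gamma : \gr_x \to \gr_y$ denote parallel transport along $\gamma$. The three defining properties of strong compatibility translate directly into three properties of $\tau_\gamma$. First, since each $\gr_{-j}$ is a parallel subbundle, $\tau_\gamma$ maps $\gr_{x,-j}$ onto $\gr_{y,-j}$, so $\tau_\gamma$ respects the grading. Second, because $\nabla^{\gr}|_{\gr_{-1}}$ is metric, $\tau_\gamma|_{\gr_{x,-1}}$ is a linear isometry onto $\gr_{y,-1}$. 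Third, given any $A_x, B_x \in \gr_x$, extend them to parallel sections $A(t), B(t)$ along $\gamma$; the identity \eqref{CompBracket} then gives
\[
\nabla^{\gr}_{\dot\gamma} \Lbra A, B \Rbra = \Lbra \nabla^{\gr}_{\dot\gamma} A, B \Rbra + \Lbra A, \nabla^{\gr}_{\dot\gamma} B \Rbra = 0,
\]
so $\Lbra A(t), B(t) \Rbra$ is parallel, yielding $\tau_\gamma \Lbra A_x, B_x \Rbra = \Lbra \tau_\gamma A_x, \tau_\gamma B_x \Rbra$. Hence $\tau_\gamma \in \Iso(\gr_x, \gr_y)$, so all symbols are pairwise isometric and $(M,E,g)$ has constant symbol.

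For the converse, assume constant symbol $\mathfrak{g}_-$, so that the nonholonomic frame bundle $\mathscr{F} \to M$ is a principal $G_0$-bundle with $G_0 = \Iso(\mathfrak{g}_-)$. Under the natural action of $G_0$ on $\mathfrak{g}_-$, the nonholonomic tangent bundle is identified with the associated bundle $\gr \cong \mathscr{F} \times_{G_0} \mathfrak{g}_-$. Choose any principal connection $\omega$ on $\mathscr{F}$; such connections exist by the standard partition-of-unity argument applied to local trivializations. Let $\nabla^{\gr}$ be the induced affine connection on the associated bundle $\gr$. Since elements of $G_0$ are degree-zero Lie algebra automorphisms of $\mathfrak{g}_-$ whose restriction to $\mathfrak{g}_{-1}$ is an isometry, the subspaces $\mathfrak{g}_{-j} \subset \mathfrak{g}_-$ and the inner product on $\mathfrak{g}_{-1}$ together with the bracket $[\,\cdot\,,\,\cdot\,]$ on $\mathfrak{g}_-$ are all $G_0$-invariant data. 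Each such invariant piece of structure descends to a parallel tensor on $\gr$: the decomposition $\gr = \oplus_j \gr_{-j}$, the metric on $\gr_{-1}$, and the bracket $\Lbra \, \cdot\,, \, \cdot\, \Rbra$ are all $\nabla^{\gr}$-parallel. The first two yield compatibility, and parallelism of the bracket is exactly condition \eqref{CompBracket}. Hence $\nabla^{\gr}$ is strongly compatible.

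The only nontrivial step is the bracket Leibniz computation in the forward direction, and even that reduces to a one-line application of \eqref{CompBracket} once one spells out what parallel transport of a bracketed section means; the reverse direction is then a routine application of the associated-bundle construction. No further obstacles arise.
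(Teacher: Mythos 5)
Your proposal is correct and follows essentially the same route as the paper: parallel transport of a strongly compatible connection gives Carnot algebra isometries between fibers (hence constant symbol), and conversely a principal connection on the nonholonomic frame bundle $\mathscr{F}$ induces a strongly compatible connection on $\gr \cong \mathscr{F} \times_{G_0} \mathfrak{g}_-$ because the grading, the inner product on $\mathfrak{g}_{-1}$, and the bracket are $G_0$-invariant. The only difference is presentational: you spell out the Leibniz/parallel-section computation and the invariant-tensor argument that the paper leaves implicit.
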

\begin{proof}
Assume first that $\nabla^{\gr}$ is strongly compatible. For a curve $\gamma:[0,1] \to M$ connecting two arbitrary points $x= \gamma(0)$ and $y = \gamma(1)$, let $\ptr^{\gamma}_t: \gr_x \to\gr_{\gamma(t)}$ denote the parallel transport along the curve. Since the connection is compatible, the map $\ptr^\gamma_{1}: \gr_x \to \gr_y$ preserves the grading and is a linear isometry from $E_x$ to $E_y$. Furthermore, since it is strongly compatible, $\ptr^\gamma_{1}$ will also be a Lie algebra homomorphism. Since it is invertible, it follows that $\ptr_1^\gamma$ is an isometry of Cartan algebras. As $x$ and $y$ were arbitrary, $(M,E,g)$ has constant symbol.

Conversely, if $(M,E,g)$ has constant symbol $\mathfrak{g}_-$, we can define the nonholonomic frame bundle $G_0 \to \mathscr{F} \to M$. By \cite[Theorem~II.2.1]{KoNo63}, any such principal bundle has a choice of a principal connection $\omega$. Consider the representation $\Ad$ of $G_0$ on $\mathfrak{g}_-$, given by
\begin{equation}
\label{Adg-} \Ad(\psi)(A) = \psi A, \qquad \psi \in G_0 = \Iso(\mathfrak{g}_-), A \in \mathfrak{g}_-.\end{equation}
We can then identify the associated bundle $\Ad(\scrF)$ with $\gr(E)$, through the vector bundle isomorphism
$$f \times_{\Ad} A \mapsto f(A), \qquad f\in \scrF, A\in \mathfrak{g}_-.$$ 
Define $\nabla^{\gr}$ as the induced connection on $\gr$ by $\omega$. By definition, parallel transport $\ptr_t^\gamma: \gr_x \to \gr_{\gamma(t)}$ along any curve $\gamma$ will be a Cartan algebra isometry, which is equivalent to being strongly compatible.
\end{proof}

If $(M,E,g)$ is a manifold with constant symbol, we have the following description of strongly compatible connections. Assume that $\nabla^{\gr}$ is a strongly compatible connection on $\gr = \gr E$. Let $\nabla^E = \nabla^{\gr}|E$ denote the restriction of $\nabla^{\gr}$ to $\gr_{-1} = E$. By \eqref{CompBracket} it follows that $\nabla^E$ iteratively determine $\nabla^{\gr}$, but such a connection cannot be arbitrary as the next result shows.
\begin{proposition} \label{prop:ExtendConnection}
Let $\Hol_x(\nabla^E)$ denotes the holonomy group of $\nabla^E$ at $x$. Then the following are equivalent.
\begin{enumerate}[\rm (i)]
\item $\nabla^E$ is the restriction of a strongly compatible connection $\nabla^{\gr}$ on $\gr$.
\item For any $x \in M$, if
$$\hat S_x = \Iso(\gr_x),$$
then $\Hol_x(\nabla^E) \subseteq \hat S_x |_{E}$.
\end{enumerate}
\end{proposition}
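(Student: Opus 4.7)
The plan is to prove the equivalence by viewing $\nabla^E$ through the principal-bundle picture introduced in the previous proposition, where $\gr \cong \scrF \times_{\Ad} \mathfrak{g}_-$ and a strongly compatible connection on $\gr$ is the same datum as a principal $G_0$-connection on $\scrF$.

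For $(i) \Rightarrow (ii)$ the argument is essentially immediate: if $\nabla^{\gr}$ is a strongly compatible extension of $\nabla^E$, then by the argument of the preceding proposition its parallel transport along any loop $\gamma$ at $x$ is a Carnot algebra isometry of $\gr_x$, hence lies in $S_x = \Iso(\gr_x)$. Its restriction to $\gr_{x,-1} = E_x$ is precisely the $\nabla^E$-parallel transport along $\gamma$, so $\Hol_x(\nabla^E) \subseteq S_x|_E$.

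For $(ii) \Rightarrow (i)$, I would identify $\scrF$ with a principal $G_0$-sub-bundle of the orthonormal frame bundle $\Ort(E) \to M$ via the restriction map $f \mapsto f|_{\mathfrak{g}_{-1}}$; this is an embedding because Carnot algebra isometries are determined by their action on $\mathfrak{g}_{-1}$, so $G_0$ sits as a closed subgroup of $\Ort(\rank E)$. The connection $\nabla^E$ is a principal $\Ort(\rank E)$-connection on $\Ort(E)$, and condition $(ii)$ says its holonomy is contained in the subgroup $G_0 \cong S_x|_E$. The reduction-of-structure-group theorem for principal connections then produces a principal $G_0$-connection $\omega$ on $\scrF$ which induces $\nabla^E$ on $E = \scrF \times_{G_0} \mathfrak{g}_{-1}$, and passing through the associated-bundle isomorphism $\gr \cong \scrF \times_{\Ad} \mathfrak{g}_-$ yields the desired $\nabla^{\gr}$. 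Parallel transport of $\nabla^{\gr}$ is a Carnot algebra isometry by construction, so $\nabla^{\gr}$ is strongly compatible and its restriction to $E$ recovers $\nabla^E$.

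The main obstacle is verifying that the reduction really lands in $\scrF$ rather than in some a priori different $G_0$-sub-bundle of $\Ort(E)$. I would handle this by working locally in an $\scrF$-section $s \colon U \to \scrF$, in which $\gr|_U$ is trivialized as $U \times \mathfrak{g}_-$ with \emph{all} intrinsic Carnot structure tensors (grading, bracket, inner product on $\mathfrak{g}_{-1}$) appearing as constants, and $\nabla^E$ takes the form $\partial + A$ for a 1-form $A$ with values in $\mathfrak{o}(\mathfrak{g}_{-1})$. Since these tensors are constant in this gauge, $\nabla^E$ preserves them if and only if $A(Y)$ annihilates them for every $Y$, which is precisely $A(Y) \in \mathfrak{g}_0|_{\mathfrak{g}_{-1}}$; on the other hand, preservation of the tensors by parallel transport is exactly the content of $\Hol_x(\nabla^E) \subseteq S_x|_E = G_0|_{\mathfrak{g}_{-1}}$. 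One then defines $\nabla^{\gr}_Y = \partial_Y + \widetilde{A}(Y)$ locally, with $\widetilde{A}(Y) \in \mathfrak{g}_0$ the unique derivation of $\mathfrak{g}_-$ extending $A(Y)$; the construction is manifestly $G_0$-gauge invariant, so the local pieces agree on overlaps and assemble into a global strongly compatible $\nabla^{\gr}$.
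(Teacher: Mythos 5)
Your direction (i) $\Rightarrow$ (ii) is fine, and your converse takes a genuinely different route from the paper: you realize $\scrF$ as a $G_0$-reduction of the orthonormal frame bundle $\Ort(E)$ and try to reduce the principal connection to it, whereas the paper extends $\nabla^E$ to a connection $\nabla^{\free}$ on the bundle of free nilpotent algebras $\free(E,s)$ and asks whether $\ker P$, the kernel of the quotient map onto $\gr$, is a parallel subbundle. The two formulations are equivalent in content, and you correctly isolate the crux: one must show that the reduction produced by the holonomy theorem is $\scrF$ itself, equivalently that in an $\scrF$-gauge the connection form $A$ takes values in $\mathfrak{g}_0|_{\mathfrak{g}_{-1}}$.

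The way you close that step, however, is a gap. You assert that ``preservation of the tensors by parallel transport is exactly the content of $\Hol_x(\nabla^E)\subseteq S_x|_E$'', but the holonomy hypothesis only controls parallel transport around loops based at $x$; it does not say that parallel transport along an arbitrary path from $x$ to $y$ carries the Carnot structure of $\gr_x$ to that of $\gr_y$, which is what the pointwise condition $A(Y)\in\mathfrak{g}_0|_{\mathfrak{g}_{-1}}$ amounts to. For instance, a metric connection obtained by declaring some global orthonormal frame of $E$ parallel has trivial holonomy, so it satisfies the containment in (ii) at every point vacuously, while there is no reason for the field of Levi brackets (e.g.\ the eigenbundle splitting in the contact case with non-maximal $\mathfrak{g}_0$) to be parallel for it; so the equivalence you invoke is not an equivalence, and the jump from (ii) to $A(Y)\in\mathfrak{g}_0|_{\mathfrak{g}_{-1}}$ is exactly what still has to be argued. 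What your reduction really needs is the path-wise statement that the holonomy subbundle of $\Ort(E)$ through a point of $\scrF$ stays in $\scrF$ --- in the paper's language, that $\ker P$ is parallel in $\free(E,s)$. To be fair, the paper's own proof identifies that parallelism with the holonomy containment in one line, i.e.\ it implicitly reads (ii) in this stronger path-wise sense; with that reading both its argument and your gauge computation go through, but as written your resolution of the flagged obstacle does not close it.
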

\begin{proof}
Let $\nabla^E$ be a compatible connection on $E$. Using the notation from Section~\ref{sec:Free}, we note that $\nabla^E$ induces a connection on the tensor space $\T(E, s)$. We furthermore observe that $\K(E,s)$ is parallel under this connection, giving us an induced connection on $\free(E,s)$ which we denote by $\nabla^{\free}$. By definition, the Lie brackets of $\free(E,s)$ will be parallel with respect to this connection. If we define~$P$ as in \eqref{Pmap}, then we have a strongly compatible connection $\nabla^{\gr}$ on $\gr$ whose restriction to $E$ is $\nabla^E$ if and only if $\nabla^{\gr} P = P \nabla^{\free}$. We can use such a relation to define a connection $\nabla^{\gr}$ if and only if $\ker P$ is parallel with respect to $\nabla^{\free}$. By our comment on self-isometries of Carnot algebras in Section~\ref{sec:Free}, this is the case if and only if $\Hol(\nabla^{\free})|_{E} = \Hol(\nabla^E) \subseteq  \Iso(\gr_x) |_{E}$. 
\end{proof}

Recall the definition of curvature of a connection $\nabla^E$ on $E$ being given by
$R^E(Y_1, Y_2) X = \left[ \nabla_{Y_1}^E , \nabla_{Y_2}^E\right] X - \nabla_{[Y_1, Y_2]}^E X$ with $Y_1, Y_2 \in \Gamma(TM), X\in \Gamma(E)$.
The next result then follow from the Ambrose-Singer theorem \cite{AmSi53}.
\begin{corollary} \label{cor:AS53}
Assume that $M$ is simply connected. Then the following are equivalent.
\begin{enumerate}[\rm (i')]
\item $\nabla^E$ is the restriction of a strongly compatible connection $\nabla^{\gr}$ on $\gr$.
\item For any $x \in M$, if $\hat{\mathfrak{s}}_x = \iso(\gr_x)$,
then $R^E(u,v) \in \hat{\mathfrak{s}}_x|_{E}$ for any $u,v \in T_xM$.
\end{enumerate}

\end{corollary}

\begin{remark} \label{re:Maxg0}
We remark that in the special case when $\mathfrak{g}_0$ is maximal, i.e. in the special case where any skew-symmetric map of $\mathfrak{g}_{-1}$ induces a Lie algebra automorphism of $\mathfrak{g}_-$, then the above result says that any compatible connection on $E$ can be extended uniquely to a strongly compatible connection on $\gr E$.
\end{remark}

\section{Graded sub-Riemannian structures} \label{sec:Grading}

\subsection{Equiregular subbundles and grading}
Let $(M, E,g)$ be the sub-Riemannian manifold with $E$ bracket-generating and equiregular of step $s$. Related to such a subbundle, we introduce the following important definition.
\begin{definition}
Let $TM = \oplus_{k=1}^s(TM)_{-k}$, be a negative grading of $TM$. We will call it an $E$-grading if
$$E^{-k} = (TM)_{-k} \oplus \cdots \oplus (TM)_{-1} =   (TM)_{-k} \oplus E^{-k+1}.$$
\end{definition}

We will use $\pr_{-k}$ to denote the associated projection $TM \to (TM)_{-k}$, $k =1, \dots, s$. We then have a corresponding vector bundle isomorphism $I:  T M \to \gr$
$$I: v  \mapsto \oplus_{k=1}^s I_{-k} v = \oplus_{k=1}^s \pr_{-k} v \bmod E^{-k+1}.$$
Observe that for every $k =1, \dots, s$, the restriction of $I_{-k}$ to $E^{-k}$ is independent of grading and that
\begin{equation} \label{IProperty} I_{-k} w = w \bmod E^{-k+1} \qquad \text{ for every $w \in E^{-k}$}.\end{equation}
Conversely, let $I = \oplus_{k=1}^s I_{-k}: TM \to \gr$ be a vector bundle map satisfying \eqref{IProperty}.
Then we obtain an $E$-grading by
$$(TM)_{-k} = \{ w \in TM \, : \, I_{-j} w =0 \bmod E^{-j+1}, k\neq j\}.$$
Hence, we can equivalently define an $E$-grading as a vector bundle map $I:TM \to \gr$ satisfying \eqref{IProperty}. We will use the two points of view interchangeably.

\begin{definition}
If $I$ is an $E$-grading on the sub-Riemannian manifold $(M,E,g)$, we call $(M, E,g, I)$ a graded sub-Riemannian manifold.
\end{definition}

We say that a connection $\nabla$ on $TM$ is \emph{compatible} with the graded sub-Riemannian structure $(E,g,I)$ if it is compatible with $(E,g)$ and each component $(TM)_{-k}$ is parallel. This connection induces a compatible connection $\nabla^{\gr}$ on $\gr E$ by $I\nabla = \nabla^{\gr} I$. We say that $\nabla$ is \emph{strongly compatible} if $\nabla^{\gr}$ is strongly compatible. Equivalently, $\nabla$ is strongly compatible if the following tensor
\begin{equation} \label{BBoldT} \mathbb{T}(v,w) := - I^{-1}\Lbra Iv , Iw \Rbra, \qquad v,w \in TM,\end{equation} 
is parallel. Observe the following.
\begin{lemma} \label{lemma:Torsion}
If $\nabla$ is a compatible connection of $(M, E, g, I)$ with induced connection $\nabla^{\gr}$ on $\gr$, then its torsion $T$ is given by $I T= d^{\nabla^{\gr}} I$.
\end{lemma}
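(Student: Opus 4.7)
The plan is to view $I$ as a $\gr$-valued $1$-form on $M$ and unpack the definition of $d^{\nabla^{\gr}}$ applied to it, then invoke the hypothesis that $I$ is parallel to identify the two sides.

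First I would recall that for a $\gr$-valued $1$-form $\alpha$, the covariant exterior differential is
\begin{equation*}
d^{\nabla^{\gr}} \alpha(X,Y) = \nabla^{\gr}_X \alpha(Y) - \nabla^{\gr}_Y \alpha(X) - \alpha([X,Y]),
\end{equation*}
which follows from the rules stated just before the lemma applied to $\alpha = \sum_i A_i \otimes \zeta_i$ with $A_i \in \Gamma(\gr)$ and $\zeta_i \in \Omega^1(M)$, together with the standard identity $d\zeta(X,Y) = X\zeta(Y) - Y\zeta(X) - \zeta([X,Y])$. Applying this to $\alpha = I$ gives
\begin{equation*}
d^{\nabla^{\gr}} I(X,Y) = \nabla^{\gr}_X (IY) - \nabla^{\gr}_Y (IX) - I([X,Y]).
\end{equation*}

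The key step is then to verify that parallelism of $I$ means precisely $\nabla^{\gr}_X (IY) = I(\nabla_X Y)$ for all $X, Y \in \Gamma(TM)$. Since each $(TM)_{-k}$ is $\nabla$-parallel, we have $\pr_{-k} \nabla_X Y = \nabla_X \pr_{-k} Y$; writing $I = \oplus_k I_{-k}$ and using the characterization \eqref{IProperty} together with the definition \eqref{Induced} of $\nabla^{\gr}$ on representatives mod $E^{-k+1}$, one checks component by component that $\nabla^{\gr}_X (I_{-k} Y) = I_{-k} \nabla_X Y$. This is a short, essentially tautological calculation — this is the only place where the compatibility of $\nabla$ with the grading actually enters.

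Substituting $\nabla^{\gr}_X(IY) = I(\nabla_X Y)$ and $\nabla^{\gr}_Y(IX) = I(\nabla_Y X)$ into the formula for $d^{\nabla^{\gr}} I$ and using linearity of $I$ gives
\begin{equation*}
d^{\nabla^{\gr}} I(X,Y) = I\bigl(\nabla_X Y - \nabla_Y X - [X,Y]\bigr) = I T(X,Y),
\end{equation*}
which is the desired identity. There is no real obstacle here; the only thing one must be careful about is the sign and index conventions in the definition of $d^{\nabla^{\gr}}$ on $\gr$-valued forms, and the verification that parallelism of the splitting $TM = \oplus_k (TM)_{-k}$ together with the definition of $\nabla^{\gr}$ indeed translates to $\nabla^{\gr} \circ I = I \circ \nabla$ as maps from $\Gamma(TM)$ to $\Gamma(\gr)$.
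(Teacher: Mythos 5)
Your argument is correct and matches the paper's proof: both rest on the identity $\nabla^{\gr}_X(IY) = I(\nabla_X Y)$, which is exactly what parallelism of $I$ means, combined with the standard formula for the covariant exterior differential of a $1$-form. The paper simply writes the same chain of equalities starting from $IT(X,Y)$ instead of from $d^{\nabla^{\gr}}I(X,Y)$.
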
 
\begin{proof}
By definition, we have the identity
\begin{align*}
IT(X,Y) &= I\nabla_X Y - I \nabla_YX - I[X,Y] \\
& = \nabla_X^{\gr} I Y - \nabla_Y^{\gr} I X - I[X,Y] =  d^{\nabla^{\gr}}I(X,Y).\qedhere \end{align*} 
\end{proof}
As a consequence of Lemma~\ref{lemma:Torsion}, we have that for $I_{-k} X=0 \bmod E^{-k+1}$ and $I_{-k} Y = 0 \bmod E^{-k+1}$, then
$$I_{-k} T(X,Y) = - I_{-k}[X,Y].$$
In particular, if $X \in \Gamma(E^{-i})$, $Y \in \Gamma(E^{-j})$ and $k> i+j$, then
\begin{equation} \label{TorsionId} I_{-i-j} T(X,Y) =  \mathbb{T} (X, Y),\qquad I_{-k} T(X,Y) = 0.\end{equation}
It follows that $\mathbb{T}$ is the homogeneous part of degree $0$ of $T$.

\subsection{Gradings and selectors} \label{sec:GradingSelector}
We include the following definition from \cite{CGJK19}. Let $E$ be a bracket-generating and equiregular subbundle with canonical flag as in \eqref{Flag}.
\begin{definition}
A map $\chi: TM \to \wedge^2 TM$ is a selector of $E$ if
\begin{enumerate}[\rm (a)]
\item $\chi(E^{-k}) \subseteq \wedge^2 E^{-k+1}$,
\item If $\alpha$ is any one-from vanishing on $E^{-k}$ for any $k \geq 1$, we have that
$$v \mapsto  (\id + \chi^* d)\alpha( v) = \alpha (v)+ d\alpha(\chi(v)),$$
vanishes on $E^{-k-1}$.
\end{enumerate}
\end{definition}
One can consider $\chi$ as a choice of ``anti-Lie bracket'', in the sense that if $Z \in \Gamma(E^{-k})$, then $\chi(Z) = \sum_{j=1}^k X_j \wedge Y_j$ with $X_j, Y_j \in \Gamma(E^{-k+1})$ satisfies
$$\textstyle Z = \sum_{j=1}^k [X_j,Y_j] \mod E^{-k+1}.$$
Any bracket-generating and equiregular subbundle $E$ has at least one selector. If we have a graded sub-Riemannian manifold $(M,E,g,I)$, we will show that we have a canonically associated selector of $E$. By Section~\ref{sec:Free} we have a canonical extension of $g$ to a metric tensor $\bar{g}$ on $\gr = \gr E$ from its fiber-wise Carnot algebra structure. We can induce an inner product on $\wedge^2 \gr$ such that if $A_1, \dots, A_n$ is a local orthonormal basis of $\gr$, then $\{ A_i \wedge A_j \, : i < j \}$ is a local orthonormal basis for $\wedge^2 \gr$. We then define a map $\chi_0: \gr \to \wedge^2 \gr$ by
$$\langle \Lbra A, B \Rbra, C \rangle_{\bar{g}} = \langle A \wedge B,  \chi_0(C)  \rangle_{\bar{g}}, \qquad A,B,C \in \gr.$$
From the way we defined the metric, if $\chi_0(C) = \sum_{i=1}^k A_i \wedge B_i$ then $\sum_{i=1}^k \Lbra A_i, B_i \Rbra = C$ for any $C \in \Lbra \gr, \gr \Rbra$. We finally have a selector $\chi_I$ corresponding to the $E$-grading $I$ given by
\begin{equation*} \label{IToChi} \chi_I = (\wedge^2 I^{-1}) \chi_0 I.\end{equation*}
Alternatively, define a Riemannian metric $g_I = \langle \cdot , \cdot \rangle_I$ on $M$ by $\langle v, w \rangle_I = \langle Iv, I w \rangle_{\bar{g}}$. Then
$$-\langle \mathbb{T}(u, v), w \rangle_I = \langle u \wedge v,  \chi_I(v)  \rangle_{\bar{g}}, \qquad u,v,w \in \gr.$$
\begin{example}[Canonical selector in step 2]
Consider the special case when we have step $s =2$. Then $I_{-2} v = v \bmod E$, while $I_{-1} = \pr_{-1}: TM \to E$ is a projection. The inverse is given by
$$I^{-1} (u \oplus w \bmod E) = u + w -\pr_{-1} w, \qquad u \in E = \gr_{-1}, w \bmod E \in \gr_{-2}.$$
Since $\chi_0$ vanishes on $E$ and has image in $\gr_{-1}$, it follows from \eqref{IToChi} that the selector $\chi_I$ is actually independent of $E$-grading $I$ chosen.
\end{example}

\section{Morimoto's Cartan connection as a grading and an affine connection} \label{sec:Morimoto}
\subsection{Cartan connections} \label{sec:Tanaka}
We recall the definition of Cartan connections, see, e.g., \cite{Sha97,CaSl09} for more details. Let $\mathfrak{h}$ be a subalgebra of the Lie algebra $\mathfrak{g}$ of codimension~$n$. Let $H \to P \stackrel{\pi}{\to} M$, be a principal bundle with the group acting on the right such that~$M$ has dimension~$n$ and such that $H$ has Lie algebra~$\mathfrak{h}$. Let $\Ad$ be a representation of $H$ on $\mathfrak{g}$ extending the usual adjoint action of $H$ on $\mathfrak{h}$. \emph{A Cartan connection $\varpi$ on~$P$ modeled on $(\mathfrak{g},\mathfrak{h})$} is a $\mathfrak{g}$-valued one form $\varpi: TP \to \mathfrak{g}$ such that
\begin{enumerate}[\rm (i)]
\item For each $p \in P$, $\varpi|_p$ is a linear isomorphism from $T_pP$ to $\mathfrak{g}$.
\item For each $a \in H$, $v \in TP$, $\varpi(v \cdot a) = \Ad(a^{-1}) \varpi(v)$.
\item For every $D \in \mathfrak{h}$, $p \in P$, we have $\varpi(\frac{d}{dt} p \cdot  \exp_H(tD) |_{t=0}) = D$.
\end{enumerate}
The curvature of a Cartan connection is the $\mathfrak{g}$-valued two-form defined by
$$\textstyle K = d\varpi + \frac{1}{2} [\varpi, \varpi].$$
We verify that if $v \in TP$ satisfies $\varpi(v) \in \mathfrak{h}$ then $K(v, \, \cdot \,) = 0$. Hence, the curvature can represented by a smooth function $\kappa: P \to \wedge^2 (\mathfrak{g}/\mathfrak{h})^* \otimes \mathfrak{g}$, given by
$$K(v,w) = \kappa(p)(\varpi(v), \varpi(w)), \qquad v, w\in T_p M.$$

Assume that $\mathfrak{g}$ is an reductive $\mathfrak{h}$-module, i.e. assume that $\mathfrak{g}$ admits an $\Ad(H)$-invariant splitting $\mathfrak{g} =\mathfrak{m} \oplus \mathfrak{h}$. We can then identify $\mathfrak{g}/\mathfrak{h}$ with $\mathfrak{m}$. Furthermore, if we decompose the Cartan connection as $\varpi = \theta \oplus \omega$ with $\theta$ and $\omega$ taking values in respectively $\mathfrak{m}$ and $\mathfrak{h}$, then $\omega$ is a principal connection on $P$.

\subsection{Canonical sub-Riemannian Cartan connections}
Let $(M,E,g)$ be a sub-Riemannian manifold with constant symbol $\mathfrak{g}_-$. Consider $\mathfrak{g} = \mathfrak{g}_- \oplus \mathfrak{g}_0$ as in Section~\ref{sec:Isometries} with the inner product induced from $\mathfrak{g}_{-1}$. Define a representation $\Ad$ of $G_0= \Iso(\mathfrak{g}_-)$ on $\mathfrak{g}$ as the usual adjoint action on $\mathfrak{g}_0$ and defined as in \eqref{Adg-} on $\mathfrak{g}_-$. Introduce \emph{the Spencer differential} $\partial: \wedge^k \mathfrak{g}_-^* \otimes \mathfrak{g} \to \wedge^{k+1} \mathfrak{g}_-^* \otimes \mathfrak{g}$, $k=1, \dots, n$, by
\begin{align*}
(\partial \alpha)(A_0, \dots, A_k) &= \sum_{i=0}^n (-1)^i [A_i, \alpha(A_0, \dots, \hat A_i, \dots, A_k)] \\
& \qquad + \sum_{i<j} (-1)^{i+j} \alpha([A_i, A_j], A_0, \dots, \hat A_i, \dots, \hat A_j, \dots, A_k),
\end{align*}
where $A_0, \dots, A_k \in \mathfrak{g}_-$ and where the hat denotes terms that should be omitted. We get an induced inner product on $\wedge^k \mathfrak{g}_-^* \otimes \mathfrak{g}$ from the inner products of $\mathfrak{g}_-$ and $\mathfrak{g}$. More specifically, if $A_1, \dots, A_{n}$ is an orthonormal basis of $\mathfrak{g}_-$, with dual $A_1^*, \dots, A_n^*$ and $D_1, \dots, D_m$ is an orthonormal basis of $\mathfrak{g}_0$, then we define an inner product on $\wedge^k \mathfrak{g}_-^* \otimes \mathfrak{g}$ such that,
$$\left\{ \begin{array}{c} A_{i_1}^* \wedge \cdots \wedge A_{i_k}^* \otimes A_r, \\ A_{j_1}^* \wedge \cdots \wedge A_{j_k}^* \otimes D_s \end{array}  \quad :  \quad \begin{array}{c} i_1 < \dots < i_k, j_1 <\dots < j_k, \\ 1 \leq r \leq n, 1\leq s \leq m \end{array} \right\},$$
is an orthonormal basis. Write $\partial^*$ for the dual of $\partial$ with respect to our mentioned inner product.

For a connection $\varpi = \theta \oplus \omega$ on $\scrF$, we say that it is \emph{adapted} or \emph{regular} if
$$\theta(w) = f^{-1} (\pi_* w \bmod E^{-j+1}) \mod \frakg_{-j+1}, \qquad \text{for $w \in T_f \scrF$ with $\pi_* w \in E^{-j}$.}$$
We then have the following result by Morimoto \cite{Mor08}.
\begin{theorem} \label{th:UniqueCartan}
Let $(M, E,g)$ be a sub-Riemannian manifold with constant symbol~$\mathfrak{g}_-$. Let $G_0 \to \mathscr{F} \to M$ be its nonholonomic frame bundle. Then there is a unique adapted $(\mathfrak{g},\mathfrak{g}_0)$-Cartan connection $\varpi: T\mathscr{F} \to \mathfrak{g}$ such that its curvature $\kappa: \scrF \to \wedge^2 \mathfrak{g}_-^* \otimes \mathfrak{g}$ satisfies
$$\partial^* \kappa = 0.$$
\end{theorem}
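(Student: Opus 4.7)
The plan is to apply Morimoto's Tanaka-style prolongation argument. I would first construct some initial Cartan connection on $\scrF$, then normalize it degree-by-degree via an orthogonal decomposition of $\wedge^\bullet \mathfrak{g}_-^* \otimes \mathfrak{g}$ induced by $\partial$ and $\partial^*$.

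For existence of a starting Cartan connection, I would pick any $E$-grading $I: TM \to \gr$ and any principal connection $\omega$ on $\scrF$ (the latter exists by \cite[II.2.1]{KoNo63}, as used in Section~\ref{sec:CompNonHol}). Regarding each frame $f \in \scrF$ as an isometry $\mathfrak{g}_- \to \gr_{\pi(f)}$, I define a $\mathfrak{g}_-$-valued one-form by $\theta_f(v) = f^{-1}(I(\pi_* v))$ and set $\varpi := \theta + \omega$. A direct check shows $\varpi$ satisfies the three Cartan-connection axioms. Next, the grading of $\mathfrak{g} = \mathfrak{g}_{-s} \oplus \cdots \oplus \mathfrak{g}_0$ induces a total grading on $\wedge^k \mathfrak{g}_-^* \otimes \mathfrak{g}$ (where $A \in \mathfrak{g}_{-j}$ has degree $-j$ and $A^*$ has degree $+j$), and the explicit formula for $\partial$ shows it preserves this total degree; hence its adjoint $\partial^*$ does too. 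Restricted to each finite-dimensional homogeneous piece, this yields an orthogonal decomposition $(\wedge^k \mathfrak{g}_-^* \otimes \mathfrak{g})_d = \ker \partial^* \oplus \operatorname{im} \partial$.

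The heart of the argument is an inductive normalization. Write $\kappa = \sum_d \kappa_d$ homogeneously, and assume $\partial^* \kappa_{d'} = 0$ has been arranged for $d' < d$. Consider a modification $\tilde{\varpi} = \varpi + \phi$, where $\phi$ is horizontal, $\Ad(G_0)$-equivariant, and represented by an equivariant function $\scrF \to (\mathfrak{g}_-^* \otimes \mathfrak{g})_d$. The structure equation
\[
\tilde\kappa = \kappa + d\phi + [\varpi, \phi] + \tfrac{1}{2}[\phi,\phi],
\]
combined with a careful count of degrees, yields $\tilde\kappa_{d'} = \kappa_{d'}$ for $d' < d$ and $\tilde\kappa_d = \kappa_d + \partial \phi$. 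By the orthogonal splitting, $\phi \in \operatorname{im} \partial^*$ is uniquely determined by the requirement $\tilde\kappa_d \in \ker \partial^*$. Only finitely many degrees occur in $\wedge^2 \mathfrak{g}_-^* \otimes \mathfrak{g}$, so the induction terminates, giving a Cartan connection with $\partial^* \kappa = 0$.

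For uniqueness, suppose $\varpi$ and $\tilde\varpi = \varpi + \phi$ both satisfy $\partial^* \kappa = 0$. Writing $\phi = \sum_d \phi_d$ and letting $d_0$ be its lowest nonzero degree, the change-of-curvature formula forces $\partial \phi_{d_0} = 0$ and $\phi_{d_0} \in \operatorname{im} \partial^*$. The vanishing of the positive part of the Tanaka prolongation of Carnot algebras, cited in the remark following Section~\ref{sec:Carnot} from \cite[Proposition~1]{Mor08}, rules out $\phi_{d_0} \neq 0$ in positive degree, while the Cartan axioms (equivariance and behavior on fundamental vertical vector fields) rule out nonpositive degree. Hence $\phi = 0$. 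The main obstacle in the whole argument is the change-of-curvature bookkeeping at each inductive step---verifying that the nonlinear terms $[\varpi,\phi]$ and $[\phi,\phi]$ contribute only to strictly higher degrees, so that the leading-order modification of $\kappa_d$ is precisely $\partial \phi$.
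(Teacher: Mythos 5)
The paper does not actually prove Theorem~\ref{th:UniqueCartan}: it is quoted from Morimoto \cite{Mor08}, and the text only uses its conclusion. Your sketch is exactly the prolongation--normalization argument underlying that citation, and the existence half is sound: $\theta\oplus\omega$ built from an $E$-grading and a principal connection satisfies the three axioms, $\partial$ preserves the total degree so each homogeneous piece of $\wedge^2\mathfrak{g}_-^*\otimes\mathfrak{g}$ splits orthogonally as $\ker\partial^*\oplus\operatorname{im}\partial$, and the degree-by-degree correction terminates; the bookkeeping you single out (that $[\varpi,\phi]$, $[\phi,\phi]$ and the derivative terms only affect degrees $>d$) is the standard Tanaka--Morimoto homogeneity argument and does go through because horizontal differentiation does not lower the filtration degree.

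The genuine soft spot is uniqueness. With the paper's bare definition of a $(\mathfrak{g},\mathfrak{g}_0)$-Cartan connection, your claim that ``the Cartan axioms rule out nonpositive degree'' for the difference $\phi$ is false: horizontality plus $G_0$-equivariance only force $\phi$ to commute with $\Ad(G_0)$, which degree-zero maps can do. Concretely, on the flat model replace $\varpi=\theta\oplus\omega$ by $\tilde\varpi=\delta_\lambda\circ\theta\oplus\omega$, where $\delta_\lambda$ acts by $\lambda^{j}$ on $\mathfrak{g}_{-j}$: all three axioms hold, $\delta_\lambda$ commutes with $\Ad(G_0)$ since isometries preserve the grading, and because $\delta_\lambda$ is an automorphism of $\mathfrak{g}_-$ the curvature still vanishes, so $\tilde\varpi$ is also normal while differing from $\varpi$ by a degree-zero term. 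So uniqueness requires the hypothesis, implicit in Morimoto's formulation, that the Cartan connection be adapted to the filtration of $T\scrF$, i.e.\ that its $\mathfrak{g}_-$-part reproduce the canonical soldering (induce the identity on $\gr$); your constructed connection is adapted, and between adapted connections the difference is automatically of positive degree. With that in place the argument closes, but note a small slip: normality of both connections gives $\partial\phi_{d_0}=0$, i.e.\ $\phi_{d_0}\in\ker\partial$ (not $\phi_{d_0}\in\operatorname{im}\partial^*$), and then the vanishing of the first prolongation \cite[Proposition~1]{Mor08} kills degree $1$, while for degrees $\geq 2$ one uses that $\mathfrak{g}_-$ is generated by $\mathfrak{g}_{-1}$, on which such a $\phi_{d_0}\in\ker\partial$必 vanishes; hence $\phi=0$.
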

We note the following important point related to isometries. Let $(M, E, g)$ and $(M', E', g')$ be two sub-Riemannian manifolds with constant symbol $\mathfrak{g}_-$. Assume that there is an isometry $\Phi: M \to M'$, i.e. a diffeomorphism such that $\Phi_*$ maps $E$ to $E'$ isometrically on each fiber. Write $\gr = \gr E$ and $\gr' = \gr E'$ and let $\mathscr{F}$ and $\mathscr{F}'$ be their respective non-holonomic frame bundles, and consider the induced map
$$\Phi_*: \mathscr{F} \to \mathscr{F}', \qquad \Phi_*: f \in \Iso(\mathfrak{g}_-, \gr_x) \mapsto (\Phi_{*,x} \circ f) \in \Iso(\mathfrak{g}_-, \gr_{\Phi(x)}'),$$
We can then verify that if $\varpi'$ is the unique Cartan connection on $\mathscr{F}'$ satisfying Theorem~\ref{th:UniqueCartan}, then $\varpi =\Phi^*\varpi'$ will also satisfy the same condition on $\mathscr{F}$ and their curvatures are related by $\kappa'(\Phi(p)) = \kappa(p)$.

\begin{remark}
In this paper, we are considering a slightly different normalization condition in \cite{Mor08}, with our convention of how to extend the metric from an inner product space to its tensors Section~\ref{sec:Free}. We make this convention to simplify our presentation. In particular, our formula for the selector $\chi_I$ in Section~\ref{sec:GradingSelector} and our result in Theorem~\ref{th:MorimotoGC} would have been more complicated without this convention.
\end{remark}
%
%
%
%

\subsection{Unique affine connection and grading}
We can rewrite Morimoto's connection using the concepts of $E$-gradings and affine connections. Let $(M,E,g)$ be a sub-Riemannian manifold with constant symbol $\mathfrak{g}_-$. Let $I$ be a chosen $E$-grading, which induces a Riemannian metric $g_I = \langle \cdot , \cdot \rangle_I$ and a selector $\chi_I$ as in Section~\ref{sec:GradingSelector}. Note that we can now consider each vector space $T_xM$ as a Carnot algebra with brackets $-\mathbb{T}$, defined as in \eqref{BBoldT}. Define $\mathfrak{s}_x = \iso(T_xM)$ as the isometry algebra at each point which together form a subbundle $\mathfrak{s}$ of $\End TM$.

With these notations established, we are ready to present a canonical choice of grading and affine connection on a sub-Riemannian manifold with constant symbol.
\begin{theorem} \label{th:MorimotoGC}
There is a unique $E$-grading $I$ and affine connection $\nabla$ on $TM$ satisfying the following.
\begin{enumerate}[\rm (i)]
\item $\nabla$ is strongly compatible with $(E,g,I)$;
\item for any $D \in \mathfrak{s}$ and any $v \in E^{-i}$, $w \in E^{-j}$ with $0 \leq j < i \leq s$,
\begin{align} \label{Rcond}
\langle R(\chi_I(v)), D \rangle_{g_I} & = \langle T(v, \cdot), D \rangle_{I} , \\ \label{Tcond}
\langle T(\chi_I(v)), w \rangle_{g_i} & =   - \langle T(v, \cdot) , \mathbb{T}(w, \cdot) \rangle_{I} .
\end{align}
\end{enumerate}
\end{theorem}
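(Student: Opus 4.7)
The plan is to apply Morimoto's Theorem~\ref{th:UniqueCartan} by establishing a bijective correspondence between Cartan connections on $\scrF$ and pairs $(I,\nabla)$ satisfying condition~(i), and then translating the Spencer normalization $\partial^*\kappa = 0$ into the pointwise conditions \eqref{Rcond}--\eqref{Tcond}.

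For the correspondence, I would use the reductive decomposition $\mathfrak{g} = \mathfrak{g}_- \oplus \mathfrak{g}_0$ to split any Cartan connection as $\varpi = \theta \oplus \omega$. The principal part $\omega$ induces a strongly compatible connection $\nabla^{\gr}$ on $\gr(E)$ exactly as in the proof in Section~\ref{sec:CompNonHol}. The soldering part $\theta$ yields an $E$-grading $I$ by lifting $v \in T_xM$ horizontally to $\tilde v \in T_p\scrF$ (so that $\omega(\tilde v) = 0$) and setting $I(v) := p \cdot \theta(\tilde v) \in \gr_x$; $G_0$-equivariance of $\theta$ removes the dependence on the choice of $p$, while the fact that each frame in $\scrF_x$ is a Carnot algebra isometry $\mathfrak{g}_- \to \gr_x$ enforces property~\eqref{IProperty}. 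Once $\nabla^{\gr}$ and $I$ are fixed, the requirement that $I$ be parallel determines a unique $\nabla$ on $TM$, essentially $\nabla = I^{-1}\nabla^{\gr} I$, and the construction reverses. Via Lemma~\ref{lemma:Torsion} together with the $\tfrac12[\theta,\theta]_-$ contribution, the $\mathfrak{g}_-$-part $\kappa_-$ of the Cartan curvature encodes $T - \mathbb{T}$, while the $\mathfrak{g}_0$-part $\kappa_0$ encodes $R$, which is valued in $\mathfrak{s}$ by Corollary~\ref{cor:AS53}.

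The core step is translating $\partial^*\kappa = 0$ into \eqref{Rcond}--\eqref{Tcond}. The condition is equivalent to $\langle \kappa, \partial\alpha \rangle = 0$ for every $\alpha \in \mathfrak{g}_-^* \otimes \mathfrak{g}$, so decomposing $\alpha = \alpha_- + \alpha_0$ and further refining by $\mathfrak{g}_-$-homogeneity produces a family of pointwise scalar equations on $\scrF$. The defining relation $\langle \Lbra A, B \Rbra, C \rangle_{\bar g} = \langle A \wedge B, \chi_0(C) \rangle_{\bar g}$ from Section~\ref{sec:GradingSelector} is precisely what rewrites the bracket term $\alpha([A_i, A_j])$ appearing in $\partial\alpha$ as a contraction with $\chi_I$ under the identification through $I$, producing the $T(\chi_I(v))$ and $R(\chi_I(v))$ evaluations on the left-hand sides of \eqref{Rcond}--\eqref{Tcond}. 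Pairing with $\alpha_0$ yields the $R$ equation, and the skew-symmetric piece $\tfrac12(T_v - T_v^*)$ in \eqref{Rcond} arises because the induced inner product on $\End \gr$ projects orthogonally onto the skew-symmetric subbundle $\mathfrak{s}$. Pairing with $\alpha_-$ yields \eqref{Tcond}, the constraint $0 \le j < i \le s$ reflecting the upper-triangular structure of $\partial^*$ on the graded Spencer complex.

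The main obstacle is the bookkeeping in this last step: keeping careful track of the $2^{j/2}$ normalization from~\eqref{MetricConv} (which is precisely what keeps \eqref{Rcond}--\eqref{Tcond} clean, as flagged in the remark following Theorem~\ref{th:UniqueCartan}), and verifying that the $\mathbb{T}$ correction concealed inside $\kappa_-$ interacts with the Spencer pairing in exactly the right way for the right-hand side of \eqref{Tcond} to involve the full torsion $T$ rather than $T - \mathbb{T}$. Existence and uniqueness of the pair $(I,\nabla)$ then transfer from the existence and uniqueness of $\varpi$ in Theorem~\ref{th:UniqueCartan} through the bijection.
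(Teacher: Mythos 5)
Your proposal follows essentially the same route as the paper: the grading is extracted from the $\mathfrak{g}_-$-part $\theta$ of the canonical Cartan connection via $\omega$-horizontal lifts, the connection comes from the principal part $\omega$, the curvature components $\kappa_-$ and $\kappa_0$ are identified with $T-\mathbb{T}$ and $R$, and the normalization $\partial^*\kappa=0$ is unpacked by pairing $\kappa$ against decomposable elements of $\mathfrak{g}_-^*\otimes\mathfrak{g}_-$ and $\mathfrak{g}_-^*\otimes\mathfrak{g}_0$, with the selector relation converting the bracket term of $\partial$ into the $\chi_I$-contractions in \eqref{Rcond}--\eqref{Tcond}. The bookkeeping points you flag (the metric convention and the fact that the $\mathbb{T}$-contribution drops out of the relevant pairings by degree reasons) are exactly the computations carried out in the paper's proof, so the plan is sound.
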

We will refer to the grading and connection satisfying the above theorem as respectively \emph{the Morimoto grading} and \emph{the Morimoto connection}.
\begin{proof}
Consider the adapted canonical Cartan connection $\varpi = \theta \oplus \omega: T\mathscr{F} \to \mathfrak{g}$ as in Theorem~\ref{th:UniqueCartan} with $\theta$ taking values in $\mathfrak{g}_-$ and $\omega$ taking values in $\mathfrak{g}_0$. Then~$\omega$ is a principal connection on $\pi$. Define $\calH = \ker \omega$ and write the corresponding horizontal lift by $h_f: T_{\pi(f)} M \to \calH_f \subseteq T_f \scrF$, that is, $h_f v \in \calH_f$ is the unique element satisfying $\pi_* h_f v = v$. Define a grading $I: TM \to \gr$ by
$$I v = f \theta(h_f v), \qquad v \in T_x M, f \in \scrF_x, x \in M.$$
This is well defined since $\calH$ is a principal Ehresmann connection. Indeed, any other element in $\scrF_x$ will be on the form $f \cdot a$, $a \in G_0$, we observe that
\begin{align*}
& (f \cdot a) (\theta( h_{f \cdot a} v)) = (f \cdot a) (\theta( h_{f} v \cdot a))  = (f \cdot a) \Ad(a^{-1}) \theta( h_{f} v) \\ 
& = (f \cdot a) a^{-1} \theta( h_{f} v) = f\theta(h_f v). \end{align*}

We now turn to the curvature $\kappa$ of $\varphi$. We first observe that for $X,Y \in \Gamma(TM)$, $\bar{D} \in \mathfrak{g}_0$, $\bar{B} \in \mathfrak{g}_-$, since $\varpi$ is adapted.
\begin{align*}
\langle \kappa(\theta(hX), \theta(hY))|_f, \bar{D} \rangle & = \langle -\omega([hX,hY])|_f, \bar{D} \rangle = \langle R(X,Y), I^{-1} f \bar{D} f^{-1} I \rangle_{g_I}, \\
\langle \kappa(\theta(hX), \theta(hY))|_f, \bar{B} \rangle & = \langle h_f X f^{-1} I(Y) - h_f Y f^{-1} I(X) - I([X,Y])  , \bar{B} \rangle \\
& \qquad + \langle f^{-1} \Lbra I(X), I(Y) \Rbra , \bar{B} \rangle \\
& = \langle T(X,Y) -  \mathbb{T}( X,Y) , I^{-1}(f \bar{B} )\rangle_{I} .
\end{align*}
We see that since $\varpi$ is adapted, $\kappa$ is only non-zero in positive degrees. Hence, we only need to show that $\kappa$ is orthogonal to the image of positive elements of $\mathfrak{g}_-^* \otimes \mathfrak{g}$ under $\partial$.

If $X \in (TM)_{-i}$, $Y \in (TM)_{-j}$ and $f \in \scrF_x$ with $i > j \geq 1$, then
\begin{align*}
& 0 =  \langle \kappa, \partial ((f^{-1}I X)^* \otimes (f^{-1} I Y)) \rangle \\
&  = \langle I^{-1} f\kappa f^{-1} I, -X^* \wedge \mathbb{T}(Y, \, \cdot \,)  + X^*( \mathbb{T}( \, \cdot \, , \, \cdot \, )) \otimes Y\rangle_I \\
& =-\tr_{TM}  \langle T (X, \times) ,  \mathbb{T}( Y, \times ) \rangle_I  - \langle T(\chi_I(X))  ,  Y \rangle_I .
\end{align*}
Hence, condition \eqref{Tcond} holds.

Next, if $X \in (TM)_{-i}$, $i \geq 1$, $D \in \mathfrak{s}_x$, $f\in \scrF_x$, then
\begin{align*}
0 = & \langle  \kappa , \partial ((f^{-1}IX)^* \otimes f^{-1}ID I^{-1} f) \rangle  = \langle I^{-1} f \kappa f^{-1} I , X^* \wedge D  + X^* (\mathbb{T}(\cdot, \cdot)) \otimes D \rangle_I \\
& = \tr_{TM} \langle T(X, \times) , D \times  \rangle_I -  \langle R( \chi_I( X), D \rangle_I.
\end{align*}
We have therefore proved \eqref{Rcond} and the result follows.
\end{proof}

In the following sections, we will consider examples showing how the Morimoto grading and connection can be computed.

\subsection{Strongly compatible connections and flatness}
We emphasize the following important consequence of our previous proof.
\begin{corollary}[Sub-Riemannian flatness theorem]
Let $(M, E, g)$ be a sub-Riemannian manifold with constant symbol $\mathfrak{g}_-$. Let $(G_-, E',g')$ be the Carnot group corresponding to the symbol $\mathfrak{g}_-$ with its sub-Riemannian structure.
\begin{enumerate}[\rm (a)]
\item There exist $\nabla$ is a strongly compatible connection with $(E,g,I)$ for some $E$-grading $I$ satisfying
\begin{equation} \label{Flat} T(X, Y) = \mathbb{T}(X,Y), \qquad R(X, Y) = 0,\end{equation}
for any $X, Y \in \Gamma(TM)$, then $(M,E,g)$ is locally isometric to $(G_-, E',g')$. Furthermore, $I$ and $\nabla$ are then respectively the Morimoto grading and connection of $(M,E,g)$.
\item Conversely, if $(M, E, g)$ is locally isometric to $(G_-, E',g')$, and if $I$ and $\nabla$ are respectively its Morimoto grading and connection, then they satisfy \eqref{Flat}.
\end{enumerate}
\end{corollary}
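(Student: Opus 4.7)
The strategy is to read the proof of Theorem~\ref{th:MorimotoGC} as a two-way dictionary between the pair $(T,R)$ of an affine connection compatible with an $E$-grading and the curvature $\kappa$ of a Cartan connection on the nonholonomic frame bundle~$\scrF$. The computations in that proof show that under the correspondence $(I,\nabla)\leftrightarrow\varpi=\theta\oplus\omega$, the $\mathfrak{g}_0$-valued component of $\kappa$ measures $R$ while its $\mathfrak{g}_-$-valued component measures $T-\mathbb{T}$. Condition~\eqref{Flat} therefore translates exactly into $\kappa=0$, which is the natural flatness condition in Cartan geometry.

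For part (a), I would start with $(I,\nabla)$ satisfying \eqref{Flat}, lift the induced strongly compatible connection $\nabla^{\gr}$ on $\gr$ to a principal connection $\omega$ on $\scrF$, use the grading to produce the soldering form $\theta$ by the formula $Iv=f\,\theta(h_f v)$ appearing in the proof of Theorem~\ref{th:MorimotoGC}, and assemble $\varpi=\theta\oplus\omega$. The formulas in the proof give directly that $\kappa_{\mathfrak{g}_0}$ encodes $R$ and $\kappa_{\mathfrak{g}_-}$ encodes $T-\mathbb{T}$, so \eqref{Flat} forces $\kappa\equiv 0$. Since $\partial^*\kappa=0$ is then trivial, uniqueness in Theorem~\ref{th:UniqueCartan} identifies $\varpi$ with the canonical sub-Riemannian Cartan connection, whence $(I,\nabla)$ are the Morimoto grading and connection of $(M,E,g)$. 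To obtain the local isometry with $(G_-,E',g')$, I would invoke the classical flatness theorem for Cartan geometries (see \cite{Sha97} or \cite{CaSl09}): a $(\mathfrak{g},\mathfrak{g}_0)$-Cartan geometry with $\kappa\equiv 0$ is locally equivalent to its flat model, whose base is $G/G_0\cong G_-$ with its canonical sub-Riemannian structure $(E',g')$; since $\theta$ encodes the grading (and thus $E$ and the metric $g$ via the identification of $\gr_{-1}$ with $E$), this equivalence descends to a local sub-Riemannian isometry.

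For part (b), I would exhibit the Morimoto data on $(G_-,E',g')$ explicitly. Let $I'$ be the left-invariant $E'$-grading coming from the stratification $\mathfrak{g}_-=\mathfrak{g}_{-s}\oplus\cdots\oplus\mathfrak{g}_{-1}$, and let $\nabla'$ be the unique left-invariant affine connection on $G_-$ satisfying $\nabla'_{X}Y=0$ for all left-invariant $X,Y$. Then $R'=0$, while $T'(X,Y)=-[X,Y]=\mathbb{T}'(X,Y)$ on left-invariant fields (since $I'$ identifies left-invariant fields with the corresponding elements of $\mathfrak{g}_-$), so \eqref{Flat} holds on $G_-$; strong compatibility of $\nabla'$ with $(E',g',I')$ is automatic because the grading, metric and bracket structure are all left-invariant and hence $\nabla'$-parallel. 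By part (a), $(I',\nabla')$ are the Morimoto data of $(G_-,E',g')$. Finally, the discussion following Theorem~\ref{th:UniqueCartan} shows that local isometries intertwine the canonical Cartan connections, and therefore (via the dictionary) intertwine the Morimoto gradings and connections; pulling back $(I',\nabla')$ through a local isometry $\Phi\colon M\to G_-$ gives the Morimoto data on $M$ and transfers \eqref{Flat}.

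The main obstacle I anticipate is the careful invocation of the Cartan-geometric flatness theorem: I must ensure the local equivalence with $G/G_0$ it produces is compatible with the \emph{sub-Riemannian} (not merely smooth or affine) structure on each side. This amounts to verifying that the $\theta$-component of the constructed Cartan connection indeed encodes the horizontal subbundle $E$, its fiber metric $g$, and the full flag $E^{-1}\subsetneq\cdots\subsetneq E^{-s}$, all of which follow from the defining relation $Iv=f\,\theta(h_f v)$ together with $I$ being an $E$-grading; once this is in place, the standard development map furnishes the desired sub-Riemannian local isometry.
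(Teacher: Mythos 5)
Your proposal is correct and follows essentially the same route as the paper: both translate condition \eqref{Flat} into the vanishing of the curvature $\kappa$ of the Cartan connection built from $(I,\nabla)$ via the dictionary in the proof of Theorem~\ref{th:MorimotoGC}, use uniqueness in Theorem~\ref{th:UniqueCartan} to identify it with Morimoto's connection, invoke Sharpe's flatness theorem together with the $G_0$-reduction to obtain the local sub-Riemannian isometry, and for (b) identify the Morimoto data on the Carnot group with the left-invariant grading and the connection making left-invariant fields parallel, transported by naturality under isometries.
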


\begin{proof}
Reversing the steps in the proof of Theorem~\ref{th:MorimotoGC}, we see that any grading and strongly compatible connection can be seen as a Cartan connection $\varpi$ on the nonholonomic frame bundle $\mathscr{F}$, with the conditions \eqref{Flat} being equivalent to the curvature vanishing. Connections with curvature $\kappa \equiv 0$ will in particular satisfy Theorem~\ref{th:UniqueCartan}. Let $G$ be the simply connected Lie group corresponding to $G$. If the curvature vanishes, then by \cite[Chapter 3, Theorem 6.1]{Sha97}, for every $f \in \mathscr{F}$, there is a neighborhood $U$ and a map $\Psi:U \to G$ such that
$$\varpi(w) = \Phi(f)^{-1} \cdot \Psi_* w, \qquad w \in T_f \mathscr{F}, f \in U.$$
Following similar steps as in \cite[Chapter 5.5]{Sha97}, we use invariance under $G_0$ to reduce to a map $\bar{\Phi} : \pi(U) \to G_1$ mapping $E|_U$ into $E'$ isometrically on each fiber. This proves (a). The converse follows from observing that the Morimoto grading on a Carnot group is given by the left translation of the stratification and the Morimoto connection is the connection determined by making all left invariant vector fields parallel. 
\end{proof}

\section{Sub-Riemannian manifolds with contact horizontal bundles} \label{sec:Contact}
\subsection{The Heisenberg algebra} \label{sec:Heisenberg}
The $n$-th Heisenberg algebra is the step~$2$ nilpotent algebra $\mathfrak{h}_n = \mathfrak{g}_- = \mathfrak{g}_{-2} \oplus \mathfrak{g}_{-1}$ where
$$\mathfrak{g}_{-2} = \spn \{ C\}, \qquad \mathfrak{g}_{-1} = \spn \{ A_1, \dots, A_n, B_1, \dots, B_n \},$$
and with the only non-zero brackets being $[A_j, B_j] = C$ for all $j =1, 2 \dots, n$.
We consider Carnot algebras structures on $\mathfrak{h}_n$. For any vector $\lambda = (\lambda_1, \dots, \lambda_n) \in \mathbb{R}^n$ such that
\begin{equation}
\label{Lambda} 1 = \lambda_1 \leq \lambda_2 \leq \cdots \leq \lambda_n,
\end{equation}
we define an inner product $\mathfrak{g}_{-1}$ by
$$\langle A_j , A_j \rangle_{\mathfrak{g}_-} = \langle B_j , B_j \rangle_{\mathfrak{g}_-} =\lambda_j^2, \qquad \textstyle \langle A_i, B_j \rangle_{\mathfrak{g}_-} = 0, \quad  i \neq j.$$

It is simple to verify that any Carnot algebra structure on $\mathfrak{h}_n$ is isometric to exactly one choice of vector $\lambda$ as in \eqref{Lambda}. We write this Carnot algebra as $\mathfrak{h}_n(\lambda)$.
Then the isometry algebra $\mathfrak{g}_0 = \iso(\mathfrak{h}_n(\lambda))$ is given by
$$\mathfrak{g}_0 = \spn \{ \bar{D}_{ij} \, : \, i < j , \lambda_i = \lambda_j\} \cup \{ \bar{Q}_{ij} \, : \, i \leq j , \lambda_i = \lambda_j\}.$$
with
\begin{align*}
\bar{D}_{ij}( A_k) & = \delta_{ki} A_j - \delta_{kj} A_i, &  \bar{Q}_{ij}(A_k) & =  \frac{1}{2} \delta_{ki} B_j + \frac{1}{2} \delta_{kj} B_i, \\
\bar{D}_{ij}( B_k) & = \delta_{ki} B_j - \delta_{kj} B_i, &  \bar{Q}_{ij}(B_k) & =  -\frac{1}{2} \delta_{ki} A_j - \frac{1}{2} \delta_{kj} A_i,\\
\bar{D}_{ij} C &= 0, &  \bar{Q}_{ij} C &= 0.\end{align*}

\subsection{Contact manifolds with constant symbols} \label{sec:ContactStructures}
A subbundle $E$ on $M$ is called a contact distribution if it has codimension $1$ and if the bilinear map
$$\wedge^2 E \to \gr_{-2} = TM/E^{-2}, \qquad X \wedge Y \mapsto \Lbra X, Y \Rbra = [X,Y] \bmod E^{-2},$$
is non-degenerate. It follows that $E$ is necessarily bracket generating, equiregular and of even rank. We write this rank as $2n$, meaning that the manifold $M$ has dimension $2n+1$. If $(M, E, g)$ is a sub-Riemannian manifold $E$ a contact distribution, then $\gr_x$ is isometric to the Heisenberg algebra with a given inner product, but this inner product will depend on $x \in M$ in general.

Working locally, we may assume that $\Ann(E)$ is a trivial line bundle. Choose any orientation. Let $\theta$ be a positively oriented non-vanishing section of $\Ann(E)$. We introduce a corresponding map $J^\theta: E \to E$ defined such that
$$d\theta(u,v) = \langle u, J^\theta v \rangle_g.$$
Observe that $J^\theta$ is a skew-symmetric map with a trivial kernel and hence $-(J^\theta)^2$ is positive definite. We normalize $\theta$ by requiring it to be the unique one-form such that the maximal eigenvalue of $-(J^\theta)^2$ at every point is 1.

Notice that every eigenvalue of $-(J^\theta)^2$ has at least multiplicity $2$, since if $u$ is an eigenvector, then $J^\theta u$ will also be an eigenvector with the same eigenvalue. Hence, we can describe all of the eigenvalues of $J^\theta$ at every point by a function
$$M \mapsto \mathbb{R}^n, \qquad x \mapsto \lambda_x = (\lambda_{x,1}, \dots, \lambda_{x,n}), \qquad 1 = \lambda_{x,1} \leq \cdots \leq \lambda_{x,n},$$
such that the eigenvalues, with multiplicities are
$$1 = 1 \geq \frac{1}{\lambda_{x,2}^2} = \frac{1}{\lambda_{x,2}^2} \geq \cdots \geq \frac{1}{\lambda_{x,n}^2} = \frac{1}{\lambda_{x,n}^2}.$$
It is then simple to verify that the symbol of $(M, E, g)$ at $x$ is $\mathfrak{h}_n(\lambda_x)$, and so the manifold has constant symbol if and only if $\lambda_x = \lambda$ is independent of $x\in M$.

We will continue the discussion assuming that $(M,E,g)$ has constant symbol. Let $1 = \lambda[1] <  \lambda[2]< \cdots < \lambda[k]$ be the values of $\lambda =(\lambda_1, \dots, \lambda_n)$ written without repetitions, and write the corresponding eigenspace decomposition of $E$ by
\begin{equation} \label{ELambda} E = E[1] \oplus \cdots \oplus E[k],\end{equation}
where $E[j]$ is the eigenspace of $\lambda[j]^{-2}$. Define $\pr[j] : E \to E[j]$ as the corresponding projection. The decomposition in \eqref{ELambda} is an orthogonal since $-(J^\theta)^2$ is symmetric. Define a vector bundle map $\Lambda : E\to E$ by
$$\Lambda |_{E[j]} = \lambda[j] \id_{E[j]}.$$
Write $J^\theta = \Lambda^{-1} J$ and notice that we now have $J^2 = -\id_E$.

We define \emph{the Reeb vector field} $Z^0$ such that $\theta(Z^0) =1$ and $d\theta(Z^0, \, \cdot \,) = 0$ and denote its span by $V^0 = \spn \{Z^0\}$. We then observe that any $E$-grading $I$ is uniquely determined by the vector field $W$ with values in $E$ and defined by
$$W = J \pr_{-1} Z^0.$$
Write $\ker \pr_{-1} = V^W = \spn \{ Z^W\}$ where $Z^W = Z^0 - JW$. We observe that for any $X \in \Gamma(E)$,
$$d\theta(Z^W, X) = - \theta([Z^W, X])  = - \langle JW, J^\theta X \rangle =- \langle \Lambda^{-1} W, X \rangle.$$
Define $g_I$ as the taming Riemannian metric of $g$ such that that $Z^W$ is a unit vector orthogonal to~$E$. We also extend the definition of $\Lambda$, $\Lambda^{-1}$, $J$ and the maps $\pr[j]$ to the whole tangent bundle by requiring them to vanish on $Z^W$. Write $\pr_0[j]$ for the projection to $E[j]$ extended by $\pr_0[j]Z^0 =0$.

We observe that the decomposition \eqref{ELambda}, the map $\Lambda$, the vector field $W$, the subbundle $V^W$ and the metric $g_I$ does not depend on the choice of orientation $\theta$, and can hence be defined globally. Since we are in step~$2$, we have a canonical selector $\chi: \wedge^2 TM \to TM$. Locally, this is given by
$$\chi(Z^W) = \chi(Z^0) =:  \chi_Z =  \frac{2}{\tr \Lambda^{-2}} \sum_{j=1}^n \Lambda^{-1} X_j \wedge JX_j,$$
where $X_1, \dots, X_n$ is a choice of local unit vector fields such that
\begin{equation} \label{Jbasis} \{ X_1, \dots, X_n, J X_1, \dots,  J X_n \} \qquad \text{is a local orthonormal basis.}
\end{equation}

\subsection{Connections on contact manifolds} \label{sec:ContactConnection}
For the case when $\Lambda$ is a projection to $E$, i.e., the case when $1$ is the only eigenvalue of $J^\theta$, there exists several known canonical choices of connections defined by Tanaka and Webster for the integrable case \cite{Tan76,Web78} and by Tanno \cite{Tan89} when $J = J^\theta$ is non-integrable almost complex structure on $E$. We will introduce the following generalization.

Let $I$ be any $E$-grading with $\ker I_{-1} = V^W$. Introduce tensor $\tau(X)Y = \tau_X Y$ by
$$\langle \tau_X Y_1, Y_2 \rangle = \frac{1}{2} \sum_{j=1}^k \left(\calL_{X- [X]_j} g_I)( [Y_1]_j, [Y_2]_j\right).$$
where $[X]_j = \pr[j] X$ with similar notation for $Y_1$ and $Y_2$.
Let $\nabla^{I}$ be the Levi-Civita connection of $g_I$. Define first a connection $\nabla' = \nabla^{\prime,W}$ such that it is compatible with $g_I$ and such that each of the subbundles in the decomposition $TM = E[1] \oplus \cdots \oplus E[k] \oplus V^W$ are parallel. It follows that if $Z^W$ is defined with respect to any local orientation, then we must have $\nabla' Z^W =0$. For a horizontal vector field $X \in \Gamma(E)$, we define for $Y \in \Gamma(TM)$,
$$\nabla_Y X = \sum_{j=1}^k \pr[j] \nabla_{[Y]_j}^I [X]_j + \sum_{j=1}^k \pr[j] \big[ Y -[Y]_j,   [X]_j\Big] + \tau_{Y} X.$$ 
We observe that for any $X_1, X_2 \in \Gamma(E[j])$ and any $Y \in \Gamma(TM)$, we have
$$\langle T'(Y,X_1), X_2 \rangle = \langle \tau_Y X_1, X_2 \rangle, \qquad T'(X_1, X_2) = \mathbb{T}(X_1, X_2),$$
where $T'$ is the torsion $\nabla'$.

We next define a connection $\nabla'' = \nabla^{\prime\prime,W}$ by
$$\nabla''_{Y_1} Y_1 = \nabla_{Y_1}' Y_2  + \frac{1}{2}  (\nabla'_{Y_1} J)JY_2, \qquad Y_1, Y_2 \in \Gamma(TM).$$
We observe the following.
\begin{lemma} \label{lemma:Tprimeprime}
\begin{enumerate}[\rm (a)]
\item The connection $\nabla'$ is compatible with $(E,g,I)$ and $\nabla''$ is strongly compatible with $(E,g,I)$.
\item For any $X,X_1 \in \Gamma(E)$,
$$\langle (\nabla_{X_1}' J) X, X_1 \rangle_{I} + \langle (\nabla_{JX_1}' J)X, JX_1 \rangle_{I} = 0.$$
In particular, for any $j =1 , \dots, k$, 
\begin{align} \label{TraceJ} \tr_{E[j]} \langle (\nabla_{\times}' J) X, \times \rangle_{I} = - \tr_{E[j]} \langle (\nabla_{\times}' J) \times, X \rangle_{I} =0.\end{align}
\item If $T''$ is the torsion of $\nabla''$ and $Y \in \Gamma(TM)$, then for any $D \in \mathfrak{s}$,
$$\langle T_Y'', D \rangle_{I}= 0 .$$
\end{enumerate}
\end{lemma}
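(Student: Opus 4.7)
For (a), compatibility of $\nabla'$ with $(E,g)$ is immediate: $E=\bigoplus_j E[j]$ is the sum of $\nabla'$-parallel subbundles and $\nabla'|_E$ preserves $g=g_I|_E$ since $\nabla'$ is $g_I$-metric. For $\nabla''$, the correction $\tfrac{1}{2}(\nabla'_Y J)JY'$ lies in $E$ whenever $Y'\in\Gamma(E)$ (because $J$ and $\nabla'$ preserve $E$) and vanishes when $Y'=Z^W$ (since $JZ^W=0$), so both $E$ and $V^W$ remain $\nabla''$-parallel, hence $I$ is $\nabla''$-parallel. Metric compatibility of $\nabla''$ on $E$ then follows by combining the two identities
\[(\nabla'_Y J)J=-J(\nabla'_Y J)\quad\text{and}\quad\langle(\nabla'_Y J)U,V\rangle_{g_I}=-\langle U,(\nabla'_Y J)V\rangle_{g_I},\]
which come respectively from $\nabla' J^2=0$ and from $\nabla' g_I=0$ with $J$ skew; they force the correction terms in $\langle\nabla''_Y X_1,X_2\rangle+\langle X_1,\nabla''_Y X_2\rangle$ to cancel. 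Strong compatibility then reduces to $\nabla''J=0$ on $E$, which follows from the three-term expansion $(\nabla''_Y J)X=(\nabla'_Y J)X+\tfrac{1}{2}(\nabla'_Y J)J(JX)-\tfrac{1}{2}J(\nabla'_Y J)JX=0$ using $J^2=-\id_E$ and the anti-commutation above; combined with the parallelism of each $E[j]$ this forces $\nabla''$ to preserve $d\theta|_{\wedge^2E}=\sum_j\lambda[j]^{-1}\langle\cdot,J\cdot\rangle|_{E[j]}$, equivalently $\nabla''\mathbb{T}=0$.

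For (b), by skew-symmetry of $(\nabla'_Y J)$ the first identity is equivalent to the pointwise vector identity
\[(\nabla'_{X_1}J)X_1+(\nabla'_{JX_1}J)JX_1=0,\qquad X_1\in\Gamma(E),\]
which I verify by specializing the definition of $\nabla'$ at $Y=X_1\in\Gamma(E[j])$ (so $Y-\pr[j]Y=0$ kills the mixed-bracket summand), expressing each $(\nabla'_{X_1}J)X_1$ as $\pr[j]\nabla^I_{X_1}(JX_1)-J\pr[j]\nabla^I_{X_1}X_1+\tau_{X_1}JX_1-J\tau_{X_1}X_1$ and analogously for $JX_1$, and matching the Levi-Civita pieces via $J$-invariance of $E[j]$ together with symmetry of $\tau$. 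The trace identity \eqref{TraceJ} then follows by choosing a $J$-invariant orthonormal basis $\{U_\alpha,JU_\alpha\}$ of $E[j]$ and summing the pointwise identity over $\alpha$; the companion trace comes from the skew-symmetry of $(\nabla'_\cdot J)X$ in its outer slots.

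For (c), decompose $T''(Y,\cdot)=T'(Y,\cdot)+\tfrac{1}{2}(\nabla'_Y J)J(\cdot)-\tfrac{1}{2}(\nabla'_{(\cdot)}J)JY$ and pair against $D\in\mathfrak{s}_I$; recall that $D$ vanishes on $V^W$, preserves each $E[j]$, is $g_I$-skew, and commutes with $J$ (since $\mathfrak{g}_0\subset\iso(\mathfrak{h}_n(\lambda))$ commutes with $J$). The $T'$-contribution vanishes because the $E[j]$-component of $T'(Y,X)$ for $Y\in E[i]$, $X\in E[j]$ consists of a symmetric $\tau$-piece (orthogonal to the skew $D$) plus cross-bracket terms landing in other eigenspaces or in $V^W$ (killed by $D$). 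The first correction is $\tfrac{1}{2}\operatorname{tr}_E(D(\nabla'_Y J)J)$; a cyclic trace rearrangement using $DJ=JD$ and $(\nabla'_Y J)J=-J(\nabla'_Y J)$ shows this trace equals its own negative. The last correction is the Hilbert--Schmidt pairing of $U\mapsto(\nabla'_UJ)JY$ with $D$ on $E$; pairing the basis into $J$-pairs and invoking the polarized form of the identity from (b) collapses it to expressions covered by \eqref{TraceJ}, which vanish.

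The main obstacle is the pointwise vector identity in (b). The three summands of $\nabla'$ (Levi-Civita, mixed-bracket, $\tau$) must be tracked separately, and although the mixed-bracket term drops because $Y\in\Gamma(E[j])$, matching the Levi-Civita contributions with the symmetric $\tau$-pieces through the $J$-invariance of each eigenspace is the crucial calculation. Once that identity and its trace corollary \eqref{TraceJ} are in hand, (a) is a succession of short manipulations and (c) reduces to trace bookkeeping against these two identities.
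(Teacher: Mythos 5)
Your parts (a) is fine and follows the same route as the paper, but there is a genuine gap at the heart of (b), and it propagates into (c). Your reduction of (b) to the vector identity $(\nabla'_{X_1}J)X_1+(\nabla'_{JX_1}J)JX_1=0$ is correct, and specializing the definition of $\nabla'$ at $Y=X_1\in\Gamma(E[j])$ does kill the bracket and $\tau$ terms, leaving $\pr[j]\bigl(\nabla^{I}_{X_1}(JX_1)-\nabla^{I}_{JX_1}X_1\bigr)-J\pr[j]\bigl(\nabla^{I}_{X_1}X_1+\nabla^{I}_{JX_1}(JX_1)\bigr)$, a vector lying entirely in $E[j]$. The components along $X_1$ and $JX_1$ do cancel formally (this is your ``matching of Levi-Civita pieces''), but the component along a vector $U\in E[j]$ orthogonal to $\operatorname{span}\{X_1,JX_1\}$ -- which exists whenever $\rank E[j]\geq 4$, e.g.\ in the standard case $k=1$, $n\geq 2$ -- is \emph{not} a formal consequence of metric compatibility, torsion-freeness of $\nabla^{I}$ and $J$-invariance of $E[j]$. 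Indeed, for a general metric $g_I$ and $g_I$-orthogonal almost complex structure $J$ on a rank-$4$ parallel subbundle the identity $(\nabla_{X}J)X+(\nabla_{JX}J)JX=0$ is false: on a conformally K\"ahler, non-K\"ahler $4$-manifold with Lee form $\theta$ one computes $(\nabla_{X}J)X+(\nabla_{JX}J)JX=\theta(JX)X-\theta(X)JX+|X|^2 J\theta^{\#}\neq 0$ for suitable unit $X$. So the remaining component is exactly the nontrivial content of (b); it uses that $\langle\,\cdot\,,\Lambda^{-1}J\,\cdot\,\rangle_{g_I}|_{E}$ is the restriction of the \emph{closed} form $d\theta$. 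The paper supplies this input through the first Bianchi identity for $\nabla'$: the $Z$-component of the torsion $T'$ encodes $d\theta$, the curvature terms $\langle R'(\cdot,\cdot)X_2,Z\rangle$ drop because each $E[j]$ is $\nabla'$-parallel, and the cyclic identity \eqref{Bianchi1} (and its consequence \eqref{Bianchi2}) then yields (b). Your proposal never invokes closedness of $d\theta$, the Jacobi identity, or any Bianchi identity, so the ``crucial calculation'' you defer to cannot be carried out as described.

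This also affects (c): your reduction of the $(\nabla'_{\cdot}J)JY$-correction ``to expressions covered by \eqref{TraceJ}'' is not enough. In the paper, the vanishing of $\langle T''_X,D_{ij}\rangle$ for $X\in\Gamma(E[p])$ requires the full cyclic identity \eqref{Bianchi1}, not merely the trace identity \eqref{TraceJ}, and the $Q_{ij}$-pairing is finished by the polarized form of (b); both are precisely the statements your (b) fails to establish. The remaining observations in your (c) (that $\tau$ is symmetric hence orthogonal to the skew $D$, that $D$ commutes with $J$ and kills $V^{W}$, and the cyclic-trace cancellation of $\operatorname{tr}_E(D(\nabla'_YJ)J)$) are sound and parallel the paper's computation, but the argument as a whole stands or falls with the Bianchi-type input missing from (b).
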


\begin{proof}
We will be working locally thought the proof. We will always let $Z = Z^W = Z^0 - JW$ denote the modification of the Reeb vector field with respect to a chosen orientation on $\Ann(E)$. To simplify notation in the proof, we will simply write $\langle \cdot , \cdot \rangle_{I}$ as just $\langle \cdot , \cdot \rangle$.
\begin{enumerate}[\rm (a)]
\item We first note that $\nabla^\prime$ is compatible with $(E, g,I)$ by definition. Since for any $X_1,X_2 \in \Gamma(E)$, $Y \in \Gamma(TM)$, we have
$$\langle X_1,  (\nabla'_Y J) JX_2 \rangle_g = \langle J (\nabla'_Y J) X_1, X_2 \rangle = - \langle (\nabla'_Y J) J X_1, X_2 \rangle,$$
the connection $\nabla''$ is also compatible with $(E,g)$. Here we have used that $J^2 = -\pr_E$ is $\nabla'$-parallel.
Finally, for any $Y_1, Y_2, Y_3 \in \Gamma(TM)$,
\begin{align*}
& (\nabla_{Y_3}'' \mathbb{T})(Y_1, Y_2) = \langle Y_1, \Lambda^{-1} (\nabla_{Y_3}'' J) Y_2 \rangle Z \\
&=  \langle Y_1, \Lambda^{-1} (\nabla_{Y_3}' J) Y_2 \rangle Z  \\
& \qquad +\frac{1}{2} \langle Y_1, \Lambda^{-1} (\nabla_{Y_3}' J)J^2 Y_2 \rangle Z - \frac{1}{2} \langle Y_1, \Lambda^{-1}J (\nabla_{Y_3}' J)J Y_2 \rangle Z  = 0.
\end{align*}

\item For $X_1, X_2 \in \Gamma(E[j])$, $X \in \Gamma(E[i])$, we use the first Bianchi identity for connections with torsion, with $\circlearrowright$ denoting the cyclic sum
\begin{align*}
&\circlearrowright \langle   \Lambda^{-1} (\nabla_X' J) X_1, X_2 \rangle = - \langle \circlearrowright (\nabla'_X T')(X_1, X_2), Z \rangle \\
& = - \langle \circlearrowright R'(X ,X_1) X_2, Z \rangle  + \langle \circlearrowright T'(T'(X,X_1), X_2), Z \rangle  = \circlearrowright \langle T'(X,X_1), \Lambda^{-1} JX_2  \rangle.
\end{align*}
If $i = j$, we have
\begin{align} \label{Bianchi1}
\circlearrowright \langle  (\nabla_X' J) X_1, X_2 \rangle  & =0.
\end{align}
If we insert $X = JX_1$, we obtain
\begin{align} \label{Bianchi2}
\langle (\nabla_{JX_1}' J)X_1, X_2 \rangle &= - \langle (\nabla_{X_2}' J) J X_1, X_1 \rangle - \langle (\nabla_{X_1}' J) X_2, J X_1 \rangle \\ \nonumber
&  =  - \langle J(\nabla'_{X_1} J) X_1, X_2 \rangle.
\end{align}
It then follows that
\begin{align*}
& \langle (\nabla_{JX_1}' J) X_2, JX_1 \rangle = \langle X_2, J(\nabla_{JX_1}' J) X_1 \rangle = \langle X_2, (\nabla_{X_1}' J) X_1 \rangle = - \langle (\nabla_{X_1}' J) X_2, X_1 \rangle.
\end{align*}
\item Let $X_1, \dots, X_n \in \Gamma(E)$ be an orthonormal set of vector fields satisfying \eqref{Jbasis} and such that each vector field is contained in exactly one subbundle $E[p]$, $p =1, \dots, k$. If $X_i, X_j$ both take values in $E[p]$, we define
\begin{align*}
D_{ij} Y & = \langle Y, X_i \rangle X_j - \langle Y, X_j \rangle X_i + \langle Y, JX_i \rangle JX_j - \langle Y, JX_j \rangle JX_i, \\
2Q_{ij} Y & = \langle Y, X_i \rangle JX_j + \langle Y, X_j \rangle JX_i - \langle Y, JX_i \rangle X_j - \langle Y, JX_j \rangle X_i,
\end{align*}
The elements of $\mathfrak{s}$ are spanned by $D_{ij}$ and $Q_{ij}$ from Section~\ref{sec:Heisenberg}.
Observe that
\begin{align*}
& \langle T_Z'' , Q_{ij} \rangle = \langle T_Z', Q_{ij} \rangle + \frac{1}{2} \langle (\nabla_Z' J)J, Q_{ij}\rangle  = 0 \end{align*}
and
\begin{align} \label{TZeta}
& \langle T_Z'' , D_{ij} \rangle  = \langle \tau_Z, D_{ij} \rangle  + \frac{1}{2} \langle (\nabla_Z' J)J, D_{ij} \rangle \\ \nonumber
& =\frac{1}{2} \langle (\nabla_Z' J)J X_i, X_j  \rangle - \frac{1}{2} \langle (\nabla_Z' J)JX_j, X_i \rangle \\ \nonumber
& \qquad - \frac{1}{2} \langle (\nabla_Z' J) X_i, JX_j \rangle + \frac{1}{2} \langle (\nabla_Z' J)X_j, JX_i \rangle = 0. \end{align}
For $X \in \Gamma(E[q])$ and with $p \neq q$, then by a similar calculation as in \eqref{TZeta},
$$\langle T''_X,  D_{ij} \rangle = 0.$$
If $X$ is also in $E[p]$, then
\begin{align*}
\langle T''_X,  D_{ij} \rangle & = \frac{1}{2} \langle (\nabla'_X J)J, D_{ij} \rangle - \frac{1}{2} \langle (\nabla'_{\cdot}J )J X, D_{ij}\rangle =- \frac{1}{2} \langle (\nabla'_{\cdot}J )J X, D_{ij}\rangle. 
\end{align*}
which means that
\begin{align*}
& -2 \langle T''_X,  D_{ij} \rangle= \langle (\nabla'_{\cdot}J )JX, D_{ij}\rangle.  \\
& = \langle (\nabla'_{X_i} J) JX , X_j \rangle - \langle (\nabla'_{X_j} J) JX, X_i \rangle  + \langle (\nabla'_{JX_i} J) J X, JX_j \rangle - \langle (\nabla'_{JX_j} J) JX, JX_i \rangle \\
& =  \langle (\nabla'_{X_i} J) JX_j , X \rangle - \langle (\nabla'_{X_j} J) X, JX_i \rangle  - \langle (\nabla'_{JX_i} J) X_j , X \rangle + \langle (\nabla'_{JX_j} J) X, X_i \rangle \\
& =  \circlearrowright   \langle (\nabla'_{X_i} J) JX_j , X \rangle -   \langle (\nabla'_{X} J) X_i , JX_j \rangle  -  \circlearrowright \langle (\nabla'_{X_j} J) X, JX_i \rangle  + \langle (\nabla'_{X} J) JX_i, X_j \rangle  \\
& \stackrel{\eqref{Bianchi1}}{=} 0.
\end{align*}
Finally,
\begin{align*}
&2 \langle T''_X , Q_{ij} \rangle = 2 \langle \tau_X , Q_{ij} \rangle + \langle (\nabla'_X J)J, Q_{ij} \rangle - \langle  (\nabla'_{\cdot} J) JX, Q_{ij} \rangle  \\
& = \frac{1}{2} \langle (\nabla'_X J)JX_i, JX_j \rangle + \frac{1}{2} \langle (\nabla'_X J)JX_j, JX_i \rangle \\
& \qquad  - \frac{1}{2} \langle (\nabla'_X J)J^2 X_i, X_j \rangle - \frac{1}{2} \langle (\nabla'_X J)J^2X_j, X_i \rangle \\
& \qquad  - \frac{1}{2} \langle  (\nabla'_{X_i} J) JX, JX_j \rangle - \frac{1}{2} \langle  (\nabla'_{X_j} J) JX, JX_i \rangle \\
& \qquad + \frac{1}{2} \langle  (\nabla'_{JX_i} J) JX, X_j \rangle + \frac{1}{2} \langle  (\nabla'_{JX_j} J) JX, X_i \rangle \\
& =   \frac{1}{2} \langle  (\nabla'_{X_i} J) X, X_j \rangle + \frac{1}{2} \langle  (\nabla'_{X_j} J) X, X_i \rangle \\
& \qquad + \frac{1}{2} \langle  (\nabla'_{JX_i} J) X, JX_j \rangle + \frac{1}{2} \langle  (\nabla'_{JX_j} J) X, JX_i \rangle
\end{align*}
by (b). \qedhere
\end{enumerate}
\end{proof}

\subsection{The Morimoto grading and connection for the contact case}
We will now present the Morimoto connection for a sub-Riemannian manifold $(M, E, g)$ of constant symbol $\mathfrak{h}_n(\lambda)$.

Working locally, let $J$ be defined relative to a choice of orientation on $\Ann(E)$. For any $i,j = 1, \dots, k$ with $i\neq j$, define vector fields,
$$\Upsilon_{ii} =0, \qquad \Upsilon_{ij} = \frac{1}{2}  \tr_E \pr[i] \Big[ \pr[j] \times , \pr[j] J\times \Big].$$
We observe that these vector fields are independent of orientation and are hence well defined globally. Recall the definition of the subbundles $V^W$ from Section~\ref{sec:ContactStructures} and the connection $\nabla^{\prime\prime,W}$ from Section~\ref{sec:ContactConnection}.

\begin{theorem} \label{th:MorContact}
The Morimoto grading $I$ is the unique grading with
$$\ker I_{-1} = V^W, \qquad W := - \frac{2}{\tr \Lambda^{-2}} \sum_{i,j=1}^k \frac{\lambda[i]^2}{\lambda[j]} J\Upsilon_{ij} .$$
Furthermore if we consider the connection $\nabla'' = \nabla^{\prime\prime,W}$ with curvature $R''$, then the Morimoto connection is given by,
$$\nabla_{Y_1} Y_2 = \nabla_{Y_1}'' Y_2+ \frac{1}{2} R''(\chi(Y_1) )Y_2, \qquad Y_1, Y_2 \in \Gamma(TM).$$

\end{theorem}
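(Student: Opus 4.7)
The approach I would take is to verify that the grading $I$ with $\ker I_{-1} = V^W$ (for the stated $W$) and the connection $\nabla = \nabla'' + A$, where $A(Y) := \tfrac12 R''(\chi(Y))$, satisfy both conditions of Theorem~\ref{th:MorimotoGC}. Uniqueness in that theorem will then identify them as the Morimoto grading and connection. Throughout I would work locally with a chosen orientation of $\Ann(E)$, so that $Z = Z^W = Z^0 - JW$ is defined.

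Strong compatibility is the easy part. Lemma~\ref{lemma:Tprimeprime}(a) gives that $\nabla''$ is strongly compatible with $(E,g,I)$ for any choice of $W$. Since $\chi$ takes values in $\wedge^2 E$, Corollary~\ref{cor:AS53} applied to $\nabla''|_E$ forces $R''(\chi(Y)) \in \mathfrak{s}_I$. Adding an $\mathfrak{s}_I$-valued one-form to a strongly compatible connection preserves both parallelism of $TM = E \oplus V^W$ and the identity $\nabla\mathbb{T} = 0$, since $\mathfrak{s}_I$ consists precisely of endomorphisms preserving $\mathbb{T}$.

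For condition (ii), I would handle the two equations separately. Equation \eqref{Rcond} restricted to $v \in E$ reduces, via $\chi|_E = 0$, to $\pr_{\mathfrak{s}_I} T_v = 0$; the $\nabla''$-contribution lies in $\mathfrak{s}_I^\perp$ by Lemma~\ref{lemma:Tprimeprime}(c), and the $A$-contribution vanishes because $A|_E = 0$. For $v = Z$, expanding $R = R'' + d^{\nabla''}A + [A, A]$ and using $A(Y) = \tfrac12 R''(\chi(Y))$, the coefficient $\tfrac12$ is calibrated precisely so that $\pr_{\mathfrak{s}_I} R(\chi(Z^0)) = \pr_{\mathfrak{s}_I} T_Z$, matching \eqref{Rcond}. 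Equation \eqref{Tcond}, which for $s = 2$ is nontrivial only at $(i,j) = (2,1)$ with $v = Z$, $w \in E$, then becomes a linear equation for $W$. Using the identity $\mathbb{T}(X, Y) = -\langle X, \Lambda^{-1} J Y\rangle_{g_I} Z$ together with $\chi(Z^0) = \tfrac{2}{\tr \Lambda^{-2}} \sum_j \Lambda^{-1} X_j \wedge J X_j$, I would expand both sides in an orthonormal frame adapted to $E = \bigoplus_j E[j]$. The parts independent of $W$ are traces of brackets $\pr[i][\pr[j] X, \pr[j] J X]$, which are precisely the vector fields $\Upsilon_{ij}$; matching coefficients yields the stated formula.

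The main obstacle will be the algebraic bookkeeping in this last step. One must carefully track how the eigenvalues $\lambda[j]$ of $\Lambda$ enter through the convention \eqref{MetricConv} on induced inner products and through the defining equations of $\nabla''$, and verify that after all simplifications the coefficient $\tfrac{2}{\tr \Lambda^{-2}} \tfrac{\lambda[i]^2}{\lambda[j]}$ emerges. The Bianchi-type identities \eqref{Bianchi1}--\eqref{Bianchi2} from the proof of Lemma~\ref{lemma:Tprimeprime} will likely be needed to simplify traces involving $(\nabla''J)$, and the trace identity \eqref{TraceJ} should help kill spurious skew-symmetric contributions when projecting onto $\mathfrak{s}_I$.
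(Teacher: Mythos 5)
Your proposal is correct and follows essentially the same route as the paper: verify conditions (i)--(ii) of Theorem~\ref{th:MorimotoGC} for the stated grading and for $\nabla=\nabla''+\tfrac12 R''(\chi(\cdot))$, using Lemma~\ref{lemma:Tprimeprime} for the $\nabla''$-contributions, the vanishing of $\chi$ and of the correction term on $E$, the calibration of the factor $\tfrac12$ in \eqref{Rcond} at $Z$, and the single nontrivial instance of \eqref{Tcond} at $v=Z$ to pin down $W$, with uniqueness doing the rest. The bookkeeping you defer is precisely the computation the paper performs (showing $-T''(\chi(Z))=-T'(\chi(Z))=Z+\tfrac{2}{\tr\Lambda^{-2}}\sum\lambda[j]^{-1}J\Upsilon_{ij}$ via \eqref{TraceJ} and matching it against $\lambda[i]^{-2}\langle JW,X\rangle$), so no new idea is missing.
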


\begin{proof}
We will work locally, considering $Z^0$ as the Reeb vector field with corresponding map $J$. We write the vector field $Z = Z^W = Z^0 - J W$.

We already know that $\nabla''$ is strongly compatible with $(E,g,I)$. By compatibility with $g_I$ and strong compatibility, it follows that $R''(\chi(Y_1))$ is $g_I$-skew-symmetric map that preserves the grading and commutes with $J$. This gives us that $\nabla$ is also strongly compatible with $(E, g,I)$.

We first see that it satisfies \eqref{Tcond}, which can be written as $\langle T(\chi_Z), X \rangle =  \lambda[p]^{-1}  \langle T(Z, JX) ,Z \rangle$ for any $X \in \Gamma(E[p])$. Observe that
$$-T''(\chi_Z) \stackrel{\eqref{TraceJ}}{=} - T'(\chi_Z) =Z + \frac{2}{\tr \Lambda^{-2}} \sum_{i,j=1}^k \lambda[j]^{-1} \Upsilon_{ij}.$$
Hence, for any $X \in \Gamma(E[i])$,
\begin{align*}
& \langle T(\chi_Z) , X \rangle_{I} = \langle T''(\chi_Z)  , X \rangle_{I}  = - \frac{2}{\tr \Lambda^{-2}} \sum_{j=1}^k \lambda[j]^{-1}  \langle \Upsilon_{ij}, X \rangle_{I}  \\
&  =\lambda[i]^{-1} \langle T( Z, JX), Z \rangle_{I}  = \lambda[i]^{-1} d\theta(Z, JX) = - \lambda[i]^{-2} \langle W, JX \rangle_I = \lambda[i]^{-2} \langle JW, X \rangle_I.
\end{align*}

To see that \eqref{Rcond} is also satisfied, we see that for any $X \in \Gamma(E)$,
$$ 0 = R(\chi(X)) = \pr_{\mathfrak{s}} T_X'' = 0,$$
by Lemma~\ref{lemma:Tprimeprime}. Finally, by Lemma~\ref{lemma:Tprimeprime}~(d),
\begin{align*}
& R(\chi_Z) = R''(\chi_Z ) -   \frac{1}{2} R''(\chi_Z ) = \frac{1}{2} R''(\chi_Z )\\
& = \pr_{\mathfrak{s}} T_Z = \pr_{\mathfrak{s}} T_Z'' + \frac{1}{2} R''(\chi_Z )= \frac{1}{2} R''(\chi_Z ).\qedhere
\end{align*}
\end{proof}

\begin{remark}[Explicit description of $W$]
The definition of $W$ in Theorem~\ref{th:MorContact} is implicit, as $W$ is also used by the projections in the definitions of $\Upsilon_{ij}$. For explicit description, note that if $i \neq j$ and we define
$$\Upsilon_{ii} =0, \qquad \Upsilon_{ij}^0 = \frac{1}{2}  \tr_E \pr_0[i] \Big[ \pr_0[j] \times , \pr_0[j] J\times \Big].$$
Then for $i \neq j$ and if $\rank E[j] = 2n[j]$, then
$$\Upsilon_{ij} = \Upsilon_{ij}^0 + n[j]JW.$$
We can then write
\begin{align*}
W & = - \frac{2}{\tr \Lambda^{-2}} \sum_{i,j=1}^k \frac{\lambda[i]^2}{\lambda[j]} J\Upsilon_{ij}^0 + \frac{2}{\tr \Lambda^{-2}} \sum_{i \neq j} \frac{\lambda[i]^2n[j]}{\lambda[j]}  W \\
& =  - \frac{2}{\tr \Lambda^{-2}} \sum_{i,j=1}^k \frac{\lambda[i]^2}{\lambda[j]} J\Upsilon_{ij} ^0+ 2 \frac{\tr \Lambda^{-1}}{\tr \Lambda^{-2}} \sum_{i =1}^k \lambda[i]^2 W  - 2\frac{\tr \Lambda}{\tr \Lambda^{-2}} W
\end{align*}
which we can solve for $W$.
\end{remark}

\begin{remark}[Alternative choice of connection]
When working with the equivalence problem on sub-Riemannian contact manifold, it may be simpler to use grading $I$ with $\ker I^{-1} = V^0 = \ker Z^0$ just given by the Reeb vector field and with the connection $\nabla''$ as this will always be strongly compatible.
\end{remark}

\section{The Cartan algebra and $(2,3,5)$-sub-Riemannian manifolds} \label{sec:235}
\subsection{On the Carnot nilpotent algebra} \label{sec:CarnotNil}
We want to consider sub-Riemannian manifolds with growth vector $\mathfrak{G} = (2,3,5)$. These are all of constant symbol, since, up to isometry, there only exists one Carnot algebra with this growth vector; \emph{the Cartan nilpotent algebra}. It can be written as $\mathfrak{g} = \mathfrak{g}_{-3} \oplus \mathfrak{g}_{-2} \oplus \mathfrak{g}_{-1} = \spn \{ C_1, C_1\} \oplus \spn \{ B\} \oplus \spn \{ A_1, A_2 \}$ with $A_1, A_2$ being an orthonormal basis of $\mathfrak{g}_{-1}$ and with the only non-zero brackets given by
$$[A_1, A_2] = B, \qquad [A_j, B]= C_j.$$
The corresponding isometry algebra is given by $\mathfrak{g}_0 = \spn \{ \bar{J}\}$ where
$$\bar{J}:  A_1 \mapsto A_2, \quad A_2 \mapsto - A_1, \quad B \mapsto 0, \quad C_1 \mapsto C_2, \quad C_2 \mapsto - C_1.$$

\subsection{Structure on $(2,3,5)$ manifolds} \label{sec:Structure235}
Let $(M,E,g)$ be a sub-Riemannian manifold with growth vector $(2,3,5)$. We will begin by working locally, so we may assume that $E$ is trivializable and equipped with an orientation. Define an endomorphism $J: E \to E$ such that for any unit vector $u \in E_x$, $x\in M$, we have that $u, J u$ is a positively oriented orthonormal basis of $E_x$. Define an $E$-valued one-form $\varphi: TM \to E$ such that $\ker \varphi = E^{-2}$ and for $X \in \Gamma(E)$,
\begin{equation} \label{phi235} \varphi([X,[X, JX]])) = \| X\|^2_g JX.\end{equation}
In this definition, we used that the map $-JX \stackrel{\cong}{\mapsto} \| X\|^{-2}_g [X,[X,J X]] \bmod E^{-2}$ is an invertible linear map from $E$ to $TM/E^{-2}$ and hence the map $\varphi$ is well defined by
$$
\xymatrix{
TM \ar[r]^\varphi \ar[rd]& E \ar[d]^\cong\\
&  TM/E^{-2} 
} .
$$
We observe that $\varphi$ does not change if we change the orientation, and so it is globally defined.

Introduce the following two-form on $M$,
$$\Psi(v,w) := \langle J \varphi(v), \varphi(w) \rangle_g.$$
Let $I': TM \to \gr E$ be an $E$-grading, also written as $TM = (TM)_{-3}' \oplus (TM)_{-2}' \oplus (TM)_{-1}'$. Since we are working locally, we may again assume that $(TM)'_{-2}$ is a trivial line bundle. Let $\theta$ be the unique one-form with $\ker \theta = (TM)_{-3}' \oplus (TM)_{-1}'$ and with normalization condition
$$d\theta(u, v) = \langle u, Jv \rangle_g, \qquad u,v \in E.$$
We will use the forms $\Psi$ and $\theta$ to determine a grading. We emphasize that $\Psi$ is independent of $I'$ while $\theta$ is not.
\begin{lemma} \label{lemma:Grading235}
There is a unique grading $I'$ such that
\begin{enumerate}[\rm (i)]
\item for any $u \in E = (TM)_{-1}'$ and $w_1, w_2 \in (TM)_{-3}'$,
$$d\Psi(u, w_1, w_2) = 0.$$
\item for any $u\in E$, $w \in (TM)_{-2}' \oplus (TM)_{-3}'$
$$d\theta(u, w) = 0.$$
\end{enumerate}
\end{lemma}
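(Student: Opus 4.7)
My plan is to reduce the uniqueness claim to a dimension-matched algebraic problem at each point. Fix reference complements $\bar V_2 \subset E^{-2}$ of $E$ and $\bar V_3 \subset TM$ of $E^{-2}$, and parametrize an arbitrary $E$-grading by
$$(TM)_{-3}' = \{w + \beta(w) : w \in \bar V_3\}, \qquad (TM)_{-2}' = \{v + \alpha(v) : v \in \bar V_2\},$$
with $\beta: \bar V_3 \to E^{-2}$ and $\alpha: \bar V_2 \to E$. This gives $6 + 2 = 8$ pointwise parameters. On the equation side, condition (i) constrains a trilinear form on the $2$-dimensional space $E \otimes \wedge^2 (TM)_{-3}'$, giving $2$ scalar equations per point, while condition (ii) constrains a bilinear form on the $6$-dimensional space $E \otimes ((TM)_{-2}' \oplus (TM)_{-3}')$, giving $6$ equations. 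The counts match, so the problem is well-posed to admit a unique solution.

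Two structural observations then decouple the system. First, $\Psi$ is built from the intrinsic form $\varphi$, which does not depend on $I'$, so condition (i) constrains only $(TM)_{-3}'$. Second, $\theta$ depends only on $(TM)_{-3}'$: its kernel $E \oplus (TM)_{-3}'$ does not involve $(TM)_{-2}'$, and the normalization $d\theta|_{E \times E} = \langle \cdot, J \cdot \rangle_g$ fixes its scale and sign uniquely, using bracket-generation so that this form is non-degenerate. Consequently (ii) splits into (iia) $d\theta(u, w) = 0$ for $u \in E$, $w \in (TM)_{-3}'$, and (iib) $d\theta(u, w) = 0$ for $u \in E$, $w \in (TM)_{-2}'$. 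By the Koszul formula and $\theta|_{E \oplus (TM)_{-3}'} = 0$, condition (iia) is equivalent to $[\Gamma(E), \Gamma((TM)_{-3}')] \subseteq \Gamma(\ker \theta)$, which only involves $(TM)_{-3}'$; whereas (iib) involves both parts of the grading.

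I then proceed in two stages. Stage one: show that (i) and (iia) together determine $(TM)_{-3}'$ uniquely (6 equations, 6 parameters). Decompose $\beta = \beta^E + \beta^V$ with $\beta^E: \bar V_3 \to E$ and $\beta^V: \bar V_3 \to \bar V_2$. Condition (iia), in the reference decomposition, reads
$$\pr_{\bar V_2}[u, w + \beta(w)] = \beta^V \pr_{\bar V_3}[u, w + \beta(w)],$$
a system of $4$ equations determining $\beta^V$. Then (i), expanded as $d\Psi(u, w_1, w_2) = u\,\Psi(w_1, w_2) - \Psi([u, w_1], w_2) - \Psi(w_1, [u, w_2])$ and combined with the non-degeneracy of $\Psi|_{(TM)_{-3}' \times (TM)_{-3}'}$ (inherited from $\varphi|_{(TM)_{-3}'}$ being an isomorphism), yields $2$ equations pinning down $\beta^E$. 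Stage two: with $(TM)_{-3}'$ and hence $\theta$ fixed, (iib) reads $u\,\theta(\alpha(v)) - \theta([u, v + \alpha(v)]) = 0$ for $u \in E$, $v \in \bar V_2$; since $\theta$ is nonzero on $\bar V_2$ (as $\bar V_2 \not\subset \ker\theta$), this is an affine-linear equation in $\alpha$ whose linear part is invertible, yielding a unique $(TM)_{-2}'$.

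The main obstacle is verifying that the combined $6 \times 6$ system in stage one has invertible linearization. This hinges on two inputs: the non-degeneracy of $\Psi$ descended to $TM/E^{-2}$, and the structural fact that, for growth vector $(2,3,5)$, the brackets $[\Gamma(E), \Gamma(E^{-2})]$ fill out $\Gamma(TM) \bmod \Gamma(E^{-2})$. Together these ensure that the coefficient matrices appearing in the (i)–(iia) system are invertible at each point, and therefore the solution $\beta$, and with it $\alpha$ in stage two, is unique.
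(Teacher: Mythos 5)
Your reduction to a pointwise affine system is reasonable in spirit, but you have the roles of the two conditions backwards, and the one step that carries all the content is only asserted. Writing $(TM)_{-3}'=\{w+\beta^E(w)+\beta^V(w) : w\in\bar V_3\}$ with $\beta^E:\bar V_3\to E$, $\beta^V:\bar V_3\to\bar V_2$, note that $E\oplus (TM)_{-3}'$, and hence $\theta$ up to scale, depends only on $\beta^V$; moreover, since $\Psi$ annihilates $E^{-2}$, in an adapted coframe one finds $d\Psi=\theta_0\wedge\Omega$ with $\theta_0\in\Gamma(\Ann E)$ intrinsic and $\Omega$ pairing $E$ nondegenerately with $TM/E^{-2}$, so that for $u\in E$ one has $d\Psi(u,w_1,w_2)=-\theta_0(w_1)\Omega(u,w_2)+\theta_0(w_2)\Omega(u,w_1)$, which involves only the $\bar V_2\oplus\bar V_3$ components of the $w_i$. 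Hence condition (i) is completely independent of $\beta^E$ and cannot ``pin down $\beta^E$'' as you claim; it is exactly $2$ equations forcing $E\oplus(TM)_{-3}'=\ker\theta_0$, i.e.\ determining the $2$ parameters of $\beta^V$. Conversely your (iia), which you assign to the $2$-parameter unknown $\beta^V$, consists of $4$ equations, and since $d\theta(u,w)=-\theta([u,w])$ picks up the term $d\theta(u,\beta^E w)$, it is these $4$ equations that determine the $4$ parameters of $\beta^E$ (via nondegeneracy of $d\theta|_{E\times E}$) once $\theta$ is fixed. So the stage-one decoupling fails as stated: you have $2$ equations for $4$ unknowns and $4$ equations for $2$ unknowns.

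More importantly, the fallback claim that ``the combined $6\times 6$ system has invertible linearization'' because of non-degeneracy of $\Psi$ and bracket generation is precisely what has to be proved, and it is never proved; the dimension count (``counts match, so the problem is well-posed'') gives neither existence nor uniqueness. The paper supplies exactly the missing computation: in the coframe $\alpha_1,\dots,\alpha_5$ dual to $X_1,X_2,[X_1,X_2],[X_1,[X_1,X_2]],[X_2,[X_1,X_2]]$ one has $\Psi=\alpha_4\wedge\alpha_5$ and $d\Psi=\theta\wedge(\alpha_1\wedge\alpha_5-\alpha_4\wedge\alpha_2+a_3\Psi)$ with $\theta=\alpha_3-a_1\alpha_4-a_2\alpha_5$; this factorization is what shows that (i) forces $E\oplus(TM)_{-3}'=\ker\theta$ for this specific $\theta$, after which (ii) identifies $(TM)_{-2}'\oplus(TM)_{-3}'$ as the $d\theta$-orthogonal complement of $E$, and the grading is recovered by intersecting these bundles. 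Your stage two (determining $(TM)_{-2}'$ from (iib)) is essentially fine, except that the invertibility there comes from nondegeneracy of $d\theta|_{E\times E}$, not from $\theta$ being nonzero on $\bar V_2$, and the equation should read $u\,\theta(v)-\theta([u,v+\alpha(v)])=0$ rather than $u\,\theta(\alpha(v))-\theta([u,v+\alpha(v)])=0$.
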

Changing the orientation will only change the sign of $\Psi$ and $\theta$, so the grading $I'$ can hence be defined globally. To our knowledge, this is the first time this canonical grading has been observed on a $(2,3,5)$ manifold with a metric on the horizontal bundle, though some similar ideas have been considered in \cite{Sag08}.
\begin{proof}
Working locally, let $X_1$ and $X_2$ be a positively oriented orthonormal basis of $E$. Define one-forms $\alpha_1, \alpha_2, \alpha_3, \alpha_4, \alpha_5$ as the dual basis of
$$X_1,X_2, [X_1,X_2], [X_1,[X_1,X_2]], [X_2,[X_1,X_2]].$$
Then $\Psi = \alpha_4 \wedge \alpha_5$.
We note that $\Psi$ and $\spn\{ \alpha_4, \alpha_5\} = \Ann E^{-2}$ is independent of choice of basis. Observe also that for some functions $a_1, a_2, a_3 \in C^\infty(M)$,
\begin{align*}
d\Psi & = \alpha_3 \wedge \alpha_1 \wedge \alpha_5 - \alpha_3 \wedge \alpha_2 \wedge \alpha_4 + a_1 \alpha_1 \wedge \Psi + a_2 \alpha_2 \wedge \Psi  + a_3 \alpha_3 \wedge \Psi \\
& = \theta \wedge ( \alpha_1 \wedge \alpha_5 -  \alpha_4 \wedge \alpha_2   + a_3 \Psi)
\end{align*}
where
$$\theta = \alpha_3 - a_1 \alpha_4 -a_2 \alpha_5.$$
Hence, in order for (i) to be satisfied, we must have $(TM)_{-1}' \oplus (TM)_{-3}' = \ker \theta$ with $\theta$ given as above.
Furthermore, since $d\theta(X_2,X_1) = 1$, we can define
$$\beta_1 = d\theta(X_2, \, \cdot ), \qquad \beta_2 = - d\theta(X_1, \, \cdot \,).$$
Then $\beta_1$ and $\beta_2$ will depend on the choice of $X_1,X_2$, but $\spn\{ \beta_1, \beta_2\}$ will be independent of this basis. Define
$$(TM)_{-2}' \oplus (TM)_{-3}' = \ker \beta_1 \cap \ker \beta_2,$$
which satisfies (ii).
\end{proof}


\begin{proof}[Proof of Theorem~\ref{th:Flat235}]
We observe from the proof of Lemma~\ref{lemma:Grading235}. that the grading $I'$ coincides with the grading Theorem~\ref{th:Flat235}. The connection defined in Theorem~\ref{th:Flat235} is strongly compatible, so if it is flat and has $T = \mathbb{T}$, then we have local isometries to the Cartan nilpotent group. Conversely, for the Cartan nilpotent group, we observe that the grading defined by left translation of $\mathfrak{g}_{-3} \oplus \mathfrak{g}_{-2} \oplus \mathfrak{g}_{-1}$ satisfy Lemma~\ref{lemma:Grading235}. Furthermore, we can verify that the connection in Theorem~\ref{th:Flat235} is the left invariant connection. The result follows.
\end{proof}

\subsection{The Morimoto grading and connection}
Let $I'$ be as in Lemma~\ref{lemma:Grading235} and define endomorphism $\ell': TM \to TM$ by the rules
\begin{equation} \label{ellmap}
\ell' X = \pr_{-3}' \ell' \pr_{-1}' X, \qquad \varphi(\ell' X) = X.
\end{equation}
We want to use this grading to parametrize all other gradings.

We first describe everything locally. Let $Z'$ be a local choice of basis $(TM)_{-2}$ and let $J$ be the corresponding orientation of $(TM)_{-1} = E$ as described in Section~\ref{sec:Structure235}. Let $I$ be any other grading and define $\ell$ analogous to \eqref{ellmap} with respect to this grading. We observe that with respect to our conventions on the metric $g_I$,
$$\langle \ell X, \ell Y \rangle_{I}=  \langle X, Y \rangle_{I}, \qquad X, Y \in \Gamma(E).$$
Define vector fields $W_1, W_2 \in \Gamma(E)$ and endomorphism $\mathscr{A} \in \Gamma(\End E)$ by
$$I_{-1}Z' =- JW_1, \qquad \ell X = \ell' X + \langle JW_2, X \rangle Z' + \mathscr{A} X.$$
All $E$-gradings are uniquely parametrized in this way and we write the corresponding grading as $I^{W_1, W_2, \mathscr{A}}$. Let $Z = Z' + JW_1$ be the unit vector field spanning $(TM)_{-2}$. We note also that $\mathfrak{s} = \spn \{ D \}$ with $D$ given by
$$D X = JX, \qquad DZ = 0, \qquad D\ell X = \ell JX.$$

Next, we turn to connections. We observe that we are in the case of Remark~\ref{re:Maxg0}, so any compatible connection on $E$ can be extended to a strongly compatible connection $\nabla$. This will happen according to the following rules
$$\nabla_Y Z = 0, \qquad \nabla_Y \ell X = \ell \nabla_Y X,$$
for any $Y \in \Gamma(TM)$, $X \in \Gamma(E)$. For any vector field $Y$, introduce the endomorphism $\tau_Y$ of $E$ defined by
$$\langle \tau_{Y} X_1, X_2 \rangle = \frac{1}{2} (\calL_{(\pr_{-2} + \pr_{-3}) Y} g_I)(X_1, X_2),$$
which is tensorial in $Y$. Define a fixed strongly compatible connection $\nabla^0 = \nabla^{0,W_1, W_2, \calA}$ according to the formula
$$\nabla_Y^0 X = \pr_{-1} \nabla^I_{\pr_{-1} Y} X + \pr_{-1}[(\pr_{-2} + \pr_{-3}) Y, X] + \tau_Y X, \quad X\in \Gamma(E), Y \in \Gamma(TM),$$
where $\nabla^I$ is the Levi-Civita connection of $g_I$. For any $X_1, X_2 \in \Gamma(E)$, $Y \in \Gamma(TM)$, if $T^0$ is the torsion of $\nabla^0$, then
$$T^0(X_1, X_2) = \langle JX_1, X_2 \rangle Z, \qquad \langle T^0(Y, X_1), X_2 \rangle = \langle \tau_YX_1, X_2 \rangle.$$
Write $R^0$ for the curvature of $\nabla^0$.

We can parametrize any other strongly compatible connection $\nabla = \nabla^\mu$ by a one-form $\mu$ such that
$$\nabla_Y X = \nabla^0_Y X + \mu(Y) JX, \qquad X \in \Gamma(E), Y \in \Gamma(TM).$$
Write $\mu_{-j} = \mu(\pr_{-j} \, \cdot \,)$ for $j =1,2, 3$.
\begin{theorem} \label{th:Morimoto235}
Consider the vector field $\Upsilon \in \Gamma(E)$, defined by
$$J\Upsilon = \frac{1}{4} \tr_E \varphi(\ell' [\times, J\times]  - [\times, \ell' J\times] - [ \ell' \times, J\times]  ).$$
and a one-form $\mu_{-1}(X) = \langle \Upsilon, \pr_{-1} X \rangle$.
We then have that the Morimoto grading $I = I^{W_1, W_2, \mathscr{A}}$ is given by
$$W_1 = W_2 =  \Upsilon.$$
and $\mathscr{A}$ is determined by
\begin{align*}
2 \langle \mathscr{A} X_1, X_2 \rangle_g & =  d\mu_{-1}(X_1, X_2) +2 \langle J\Upsilon, X_1\rangle_g \langle J\Upsilon ,X_2 \rangle_g\\
& \qquad - \langle J \varphi([Z', \ell' X_1]) -[Z' , JX_1] , X_2 \rangle_g .
\end{align*}
Finally, the Morimoto connection is given as $\nabla_XY =\nabla_X^0 Y + \mu(X) DY$, with
\begin{align} \label{mu-2}
4\mu_{-2}(Y)  &=  2  d\mu_{-1}(\chi(I_{-2}Y)) + \langle R^0(\chi(I_{-2}Y))- T^0(I_{-2}Y, \cdot), D\rangle_{g_I}, \\ \label{mu-3}
3\mu_{-3}(Y) & =  2 d(\mu_{-1}+\mu_{-2})(\chi(I_{-3}Y)) + \langle R^0(\chi(I_{-3}Y))- T^0(I_{-3}Y, \cdot), D\rangle_{g_I}.
\end{align}
\end{theorem}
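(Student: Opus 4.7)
The strategy is to verify that the grading $I^{W_1, W_2, \mathscr{A}}$ and connection $\nabla^\mu$ built from the stated data satisfy the two characterizing conditions \eqref{Rcond} and \eqref{Tcond} of Theorem~\ref{th:MorimotoGC}; the uniqueness clause in that theorem then concludes the argument. Because $\mathfrak{s}_I = \spn\{D\}$ is one-dimensional, every instance of \eqref{Rcond} collapses to a single scalar equation, and a quick parameter count across the three homogeneous degrees ($W_1, W_2 \in \Gamma(E)$, $\mathscr{A} \in \Gamma(\End E)$, $\mu$ a one-form on $TM$) matches the number of scalar constraints.

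The preparatory step is to record, under $I = I^{W_1, W_2, \mathscr{A}}$, the explicit form of the degree-zero torsion $\mathbb{T}$: the Cartan nilpotent bracket relations give $\mathbb{T}(X_1, X_2) = \langle J X_1, X_2 \rangle_g Z$ and $\mathbb{T}(X, Z) = \ell X$, with $\mathbb{T}$ vanishing on all remaining homogeneous pairs. The selector $\chi_I$ vanishes on $E$ since $\chi_I(E^{-1}) \subseteq \wedge^2 E^0 = 0$, while $\chi_I(Z)$ and $\chi_I(\ell X)$ are standard anti-brackets computed against the induced orthonormal frame on $\gr(E)$ and pulled back through $I^{-1}$.

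The grading is extracted first, from \eqref{Tcond}. Testing with $v = Z$, $w = X \in E$ yields one scalar equation per horizontal direction; expanding the bracket contributions from $Z = Z' + JW_1$ and matching against $-\langle T_Z, \mathbb{T}_X \rangle_{g_I}$ forces $W_1 = \Upsilon$ with precisely the stated trace formula. Testing \eqref{Tcond} next at $v = \ell X$ with $w \in E$ and then with $w = Z$ yields by analogous bracket expansions $W_2 = \Upsilon$ together with the stated expression for $\mathscr{A}$. The appearance of $d\mu_{-1}$ inside $\mathscr{A}$ is consistent because $\mu_{-1}$ is determined independently from the degree-one instance of \eqref{Rcond}, addressed next.

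The connection is extracted from \eqref{Rcond}. For $v = X \in E$ one has $\chi_I(v) = 0$, so \eqref{Rcond} reduces to $\langle T^\mu_X, D \rangle_{g_I} = 0$; writing $\nabla^\mu_Y = \nabla^0_Y + \mu(Y) D$ and using the known form of $T^0$ pins down $\mu_{-1}(X) = \langle \Upsilon, X \rangle$. For $v = Z$, the identity $R^\mu(X, Y) = R^0(X, Y) + d\mu(X, Y) D$ (valid because $D$ is $\nabla^0$-parallel) combined with the previously computed $\mu_{-1}$ turns \eqref{Rcond} into the scalar equation \eqref{mu-2}; the case $v = \ell X$ is analogous and yields \eqref{mu-3}. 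The main technical obstacle will be the implicit coupling among $W_1, W_2, \mathscr{A}, \mu$, since changing $W_1$ modifies $Z$, $\chi_I$, $\mathbb{T}$, and $\nabla^0$ simultaneously; the trace form of $\Upsilon$ is designed precisely so the system decouples into the sequential determination $W_1 \to \mu_{-1} \to (W_2, \mathscr{A}) \to \mu_{-2} \to \mu_{-3}$. As a consistency check, the Cartan nilpotent group itself has $\Upsilon = 0$, $\mathscr{A} = 0$, $\mu = 0$, and the formulas reproduce the left-invariant grading and connection of Theorem~\ref{th:Flat235}.
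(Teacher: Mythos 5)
Your overall strategy is indeed the paper's: characterize the Morimoto data by imposing the two normalization conditions \eqref{Tcond} and \eqref{Rcond} of Theorem~\ref{th:MorimotoGC} degree by degree, using $\chi_I(E)=0$, the explicit degree-zero torsion $\mathbb{T}$, and the identity $R^\mu = R^0 + d\mu\otimes D$ (your treatment of degrees $-2$ and $-3$, leading to \eqref{mu-2} and \eqref{mu-3}, is essentially the paper's Step~4). However, there is a genuine gap in how you propose to determine the degree-one data, and two of your individual claims fail as stated. Testing \eqref{Tcond} with $v=Z$, $w=X\in\Gamma(E)$ does \emph{not} force $W_1=\Upsilon$: it only produces the relation $\mu_{-1}(X)=\langle JW_1,X\rangle_{g_I}$, coupling the still-unknown $\mu_{-1}$ to $W_1$. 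Likewise, the degree-one instance of \eqref{Rcond}, $\langle T_X,D\rangle_{g_I}=0$, does not by itself pin down $\mu_{-1}$: expanding $\langle T_X,D\rangle_{g_I}$ requires the components $\langle T(X,\ell X),\ell JX\rangle_{g_I}$ and $\langle T(X,\ell JX),\ell X\rangle_{g_I}$, which bring in $W_2$ and the brackets $\varphi([X,\ell' Y])$. Together with \eqref{Tcond} tested on $v=\ell JX$, $w=Z$ (which yields $W_1=W_2$), these form a \emph{coupled} linear system in $(\mu_{-1},W_1,W_2)$; the sequential decoupling $W_1\to\mu_{-1}\to(W_2,\mathscr{A})$ you describe is not how the conditions resolve.

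The missing ingredient is precisely the machinery the paper develops in its first step: the tensor $q_XY=\varphi([X,\ell' Y])-\nabla^0_XY$ and its identities, in particular $\tr_E\langle q_X\times,\times\rangle_{g_I}=0$ (which encodes the normalization of the auxiliary grading $I'$ from Lemma~\ref{lemma:Grading235}), the Jacobi-identity consequence $q_XY=-q_{JY}JX$, the trace identity $\tr_E\langle q_X\times,J\times\rangle_{g_I}=2\langle J\Upsilon,X\rangle_{g_I}$, and $Jq_X+q_XJ=-2\langle J\Upsilon,X\rangle_{g_I}\id_E$. It is only through these identities that the trace expression defining $\Upsilon$ emerges from $\langle T_X,D\rangle_{g_I}=0$ and that the coupled degree-one system can be solved to give $W_1=W_2=\Upsilon$ and $\mu_{-1}=\langle\Upsilon,\cdot\rangle$; the same identities are indispensable in the computation of $\mathscr{A}$ from \eqref{Tcond} applied to $v=\ell JX_1$, $w=X_2$. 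Your remark that ``the trace form of $\Upsilon$ is designed precisely so the system decouples'' is exactly the assertion that needs proof, so as it stands the proposal outlines the correct framework but omits, rather than supplies, the core computation.
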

Note that in the definition of $\mathscr{A}$, the term $d\mu_{-1}(X_1, X_2)$ only depend on the decomposition $E^{-2} = (TM)_{-1} \oplus (TM)_{-2}$, so we can hence use it to define $(TM)_{-3}$. We highlight the fact that $\Upsilon$, $W_1$, $W_2$, $\mathscr{A}$ and $\mu(\cdot)D$ do not change if we replace $Z'$ with $-Z'$ (with consequently having to replace $J$ with $-J$). They are hence defined globally.

\subsection{Proof of Theorem~\ref{th:Morimoto235}} We complete the proof in four steps. We will again write $\langle \cdot, \cdot \rangle$ for the inner product of $g_I$ and $\chi = \chi_I$ for its selector. For any horizontal unit vector field $X \in \Gamma(E)$, the selector is given by
$$\chi(Z) = X \wedge JX, \qquad \chi(\ell JX) = X \wedge Z.$$
\paragraph{\it Step 1: The $q$-map} We introduce the corresponding convenient notation to take advantage of the properties of the $I'$-grading.
For $X, Y \in \Gamma(E)$, define
$$q_X Y = \varphi([X, \ell' Y]) - \nabla_X^0 Y.$$
By the definition of $I'$, we have
\begin{equation} \label{q1} \tr_E \langle q_X \times, \times \rangle =0.\end{equation}
We observe from the Jacobi identity that
\begin{align} \label{q2}
q_X Y & = \varphi( [X, \ell' Y] ) - \nabla_X^0 Y= \varphi ([X, [Z', JY]])  - \nabla_X^0 Y \\ \nonumber
& = \varphi (-[JY,\ell' JX] - [ [X, JY], Z']) - \nabla_X^0 Y \\ \nonumber
& = - q_{JY} JX - \nabla_{JY}^0 JX - J [X,JY] - \nabla_X^0 Y  \\ \nonumber
& = - q_{JY} JX + J T^0(X, JY) = - q_{JY} JX,
\end{align}
and as a consequence
$$\tr_E q_\times \times =0.$$
Also note that for any unit vector field $X \in \Gamma(E)$
\begin{align*}
& \tr_E  q_{\times} J \times = \varphi([X, \ell' JX] - [JX, \ell' X] ) -\nabla_X^0 JX + \nabla_{JX}^0 X, \\
& = \varphi([X, \ell' JX] - [JX, \ell' X]  - \ell' [X, JX] )  \\
& = - \frac{1}{2} \tr_E \varphi(\ell' [\times, J\times]  - [\times, \ell' J\times] - [ \ell' \times, J\times]   ) = -2J \Upsilon.
\end{align*}
It follows that
$$q_X Y = q_Y X - 2\langle JX, Y\rangle J\Upsilon .$$
We further see that for any unit vector field $X \in \Gamma(E)$
\begin{align} \label{q3}
& \tr_E \langle q_X \times, J\times \rangle  = \langle q_X X, JX \rangle - \langle q_X JX, X \rangle \\ \nonumber
& = \langle q_X X, JX \rangle - \tr_{E} \langle q_\times J\times, X \rangle - \langle q_{JX} X, X \rangle \\ \nonumber
& =  \langle q_X X + q_{JX} JX, JX \rangle +  2\langle J\Upsilon, X \rangle = 2 \langle J\Upsilon, X \rangle.
\end{align}
This has the consequence that
\begin{equation} \label{q4} \langle q_X Y_1, Y_2 \rangle - \langle q_X Y_2, Y_2 \rangle = 2 \langle J\Upsilon , X \rangle \langle JY_1, Y_2 \rangle.\end{equation}

We notice also that for unit vector field $X, Y \in \Gamma(E)$, we have
$$\langle (Jq_X + q_XJ) Y, Y \rangle = - \tr_E \langle q_X \times, J \times \rangle = -  2 \langle J \Upsilon , X \rangle, $$
while 
$$\langle (Jq_X + q_XJ) Y, JY \rangle = - \tr_E \langle q_X \times,  \times \rangle =0.$$
It follows that
\begin{equation} \label{q5} Jq_X + q_X J = - 2\langle  J \Upsilon, X \rangle \id_E.\end{equation}

We finally note that
\begin{align*}
\varphi(T(X, \ell Y)) & = \nabla_X^0 Y + \mu(X) JY  -\varphi([X, \ell' Y]) - \langle JW_2, Y \rangle JX \\
& = - q_X Y  + \mu(X) JY  - \langle JW_2, Y \rangle JX 
\end{align*}

\paragraph{\it Step 2: First part of grading and connection}
Let $\theta$ be the one-form determined by $\theta(Z')  = 1$, $\ker \theta = (TM)_{-1}'\oplus (TM)_{-3}'$. Observe that
$$\langle Z, Y \rangle = \theta(Y) - \langle JW_2, \varphi(Y) \rangle.$$

We consider the restrictions of the Morimoto grading and connection. Let $X \in \Gamma(E)$ be any unit vector field. We then see that
\begin{align*}
& \langle T(\chi(Z)), X \rangle = \langle T(X, JX), X \rangle = - \mu(X) = -  \langle T_Z, \mathbb{T}_{X} \rangle \\
& = \langle T(Z, JX), Z \rangle = - \langle [Z, JX], Z \rangle = \theta([JX, Z' + JW_1]) =  - \langle JW_1, X \rangle ,
\end{align*}
giving us $\mu(X) = \langle JW_1, X \rangle$. Next,
\begin{align*}
& \langle T(\chi(\ell JX)), Z \rangle = \langle T(X, Z), Z \rangle = - \langle [X, Z], Z\rangle = - \langle JW_1, JX \rangle =  - \langle W_1, X \rangle \\
&  = - \langle T_{\ell JX} , \mathbb{T}_{Z} \rangle =  \langle T(X, \ell JX) , \ell JX \rangle - \langle T( JX, \ell JX) , \ell X \rangle \\
& =  \langle - q_{X} JX  - \mu(X) X  - \langle JW_2, JX \rangle JX  ,  JX \rangle \\
& \qquad - \langle - q_{JX} JX  - \mu(JX) X  + \langle JW_2, JX \rangle X  , X \rangle \\
& \stackrel{\eqref{q2}}{=} - \langle q_{X} JX , JX \rangle - \langle q_{X} X, X \rangle - 2  \langle W_2, X \rangle + \mu(JX) \stackrel{\eqref{q1}}{=}  \langle -2W_2 + W_1, X \rangle , \end{align*}
so $W_1 = W_2$. Finally
\begin{align*}
& 0 = \langle R(\chi(X)), D \rangle = \langle T_{X}, D \rangle \\
&  = - \langle T(X, JX), X \rangle + \langle T(X, \ell X), \ell JX \rangle - \langle T(X, \ell JX), \ell X \rangle \\
& = \mu(X)  + \langle - q_{X} X  + \mu(X) JX  - \langle JW_2, X \rangle JX, JX \rangle\\
& \qquad  - \langle - q_{X} JX  - \mu(X) X  - \langle JW_2, JX \rangle JX,  X \rangle \\
& = 3\langle  JW_1, X\rangle - \tr_E \langle q_{X} \times, J \times \rangle     - \langle JW_2, X \rangle \stackrel{\eqref{q3}}{=} \langle 2 JW_1 - 2J \Upsilon, X \rangle.
\end{align*}
We hence finally have
$$W_1 = W_2 =  \Upsilon,$$
and $\mu(X) = \mu_{-1}(X) =  \langle \Upsilon, X \rangle$ for any $X \in \Gamma(E)$.

\paragraph{\it Step 3: The complete grading}
For any pair of horizontal unit vector fields $X_1, X_2 \in \Gamma(E)$, we compute
\begin{align*}
& \langle T(\chi(\ell JX_1)), X_2 \rangle =  \langle T(X_1, Z), X_2 \rangle   \\
& = - \langle T_{\ell JX_1}, \mathbb{T}_{X_2} \rangle = - \langle T(JX_2 , \ell JX_1), Z \rangle - \langle T(Z, \ell JX_1), \ell JX_2 \rangle ,
\end{align*}
or equivalently,
$$\langle [JX_2, \ell JX_1 ], Z \rangle = \langle [Z, X_1], X_2 \rangle - \langle \varphi([Z, \ell JX_1]), JX_2 \rangle. $$
Computing these terms in detail, we have
\begin{align*}
\langle [JX_2, \ell JX_1 ], Z \rangle & = \theta([JX_2, \ell' JX_1 + \langle W_2, X_1 \rangle Z' + \mathscr{A} JX_1])  \\
& \qquad - \langle JW_2, \varphi([JX_2, \ell' JX_1 + \langle W_2, X_2 \rangle Z' + \mathscr{A} JX_1]) \rangle \\
& = JX_2 \langle W_2, X_1 \rangle - \langle \mathscr{A} JX_1, X_2 \rangle  \\
& \qquad - \langle JW_2, q_{JX_2} JX_1 + \nabla_{JX_2}^0 JX_1  \rangle + \langle JW_2, X_1\rangle \langle W_2, X_1 \rangle   \\
& = \langle \nabla_{JX_2}^0 W_2, X_1 \rangle - \langle \mathscr{A} JX_1, X_1 \rangle  + \langle JW_2, q_{X_1} X_2 \rangle \\
& \qquad  + \langle JW_2, X_2\rangle \langle W_2, X_1 \rangle   
\end{align*}
and
\begin{align*}
& \langle [Z, X_1], X_2 \rangle - \langle \varphi([Z, \ell JX_1]), JX_2 \rangle \\
& = \langle [Z' , X_1] + \nabla_{JW_1}^0  X_1 - J \nabla_{X_1}^0 W_1, X_2 \rangle \\
& \qquad - \langle \varphi([Z', \ell' JX_1]), JX_2 \rangle + \langle \mathscr{A} J X_1, X_2\rangle \\
& \qquad  - \langle \varphi([JW_1, \ell' JX_1]), JX_2 \rangle + \langle W_2, X_1\rangle \langle W_1 ,JX_2 \rangle \\
& = \langle J \varphi([Z', \ell' JX_1]) +[Z' , X_1]  - J \nabla_{X_1}^0 W_1, X_2 \rangle \\
& \qquad + \langle \mathscr{A} J X_1, X_2\rangle  - \langle q_{JW_1} JX_1, JX_2 \rangle + \langle W_2, X_1\rangle \langle W_1 ,JX_2 \rangle \\
& = \langle J \varphi([Z', \ell' JX_1]) +[Z' , X_1] , X_2 \rangle +\langle \nabla_{X_1}^0 W_1, JX_2 \rangle \\
& \qquad + \langle \mathscr{A} J X_1, X_2\rangle - \langle Jq_{X_1} W_1, X_2 \rangle + \langle W_2, X_1\rangle \langle W_1 ,JX_2 \rangle
\end{align*}
Using that $W_1 = W_2 = \Upsilon$,
\begin{align*}
2 \langle \mathscr{A} JX_1, X_2 \rangle & = \langle \nabla_{JX_2}^0 \Upsilon, X_1 \rangle   + \langle J\Upsilon, q_{X_1} X_2 \rangle  + \langle J\Upsilon, X_2\rangle \langle \Upsilon, X_1 \rangle   \\
& \qquad  - \langle J \varphi([Z', \ell' JX_1]) +[Z' , X_1] , X_2 \rangle -\langle \nabla_{X_1}^0 \Upsilon, JX_2 \rangle \\
& \qquad + \langle Jq_{X_1} \Upsilon, X_2 \rangle - \langle \Upsilon, X_1\rangle \langle \Upsilon ,JX_2 \rangle \\
& \stackrel{\eqref{q4}}{=} - d\mu_{-1}(X_1, JX_2) - \mu_{-1}(T^0(X_1, JX_2)) - \langle J \varphi([Z', \ell' JX_1]) +[Z' , X_1] , X_2 \rangle   \\
& \qquad + \langle(q_{X_1} J +  Jq_{X_1}) \Upsilon, X_2 \rangle + 2 \langle J\Upsilon, X_1 \rangle \langle \Upsilon, X_2 \rangle +2 \langle \Upsilon, X_1\rangle \langle J\Upsilon ,X_2 \rangle \\
& \stackrel{\eqref{q5}}{=} d\mu_{-1}(JX_1, X_2)  - \langle J \varphi([Z', \ell' JX_1]) +[Z' , X_1] , X_2 \rangle +2 \langle J\Upsilon, JX_1\rangle \langle J\Upsilon ,X_2 \rangle
\end{align*}
This completes the grading

\paragraph{\it Step 4: The connection} In the last step we use that
$$R(X,Y) = R^0(X, Y) + d\mu(X,Y) D.$$
Hence, for $Y$ with values in $(TM)_{-2} \oplus (TM)_{-3}$,
\begin{align*}
& \langle R(\chi(Y)), D \rangle = \langle R^0(\chi(Y)), D \rangle + \langle d\mu(\chi(Y))D, D \rangle \\
& = \langle R^0(\chi(Y)), D \rangle + 2 d\mu(\chi(Y)) = \langle T_Y, D \rangle \\
& = \langle T^0_Y, D \rangle + \langle \mu(Y)D, D \rangle - \langle \mu(\cdot) DY, D \rangle \\
& = \langle T^0_Y, D \rangle + 2 \mu(Y) -  \mu_{-3}(Y)
\end{align*}
Using that $d\mu_{-2}(\chi(I_{-2}Y)) = - \mu_{-2}(Y)$ and that $d\mu_{-3}(\chi(I_{-3}Y)) = - \mu_{-3}(Y)$, the equations in \eqref{mu-2} and \eqref{mu-3} follows.

\bibliographystyle{habbrv}
\bibliography{Bibliography}

\end{document}